\documentclass{ipiOF}
\usepackage{amsmath}
  \usepackage{paralist}
  \usepackage{graphics} 
  \usepackage{epsfig} 
\usepackage{graphicx}  \usepackage{epstopdf}
 \usepackage[colorlinks=true]{hyperref}
 \usepackage{comment}
 \usepackage{enumitem}
\usepackage{autonum}
\usepackage{subcaption}
\usepackage{makebox}
\usepackage{booktabs}
\hypersetup{urlcolor=blue, citecolor=red}

  \textheight=8.2 true in
   \textwidth=5.0 true in
    \topmargin 30pt
     \setcounter{page}{1}



\newtheorem{theorem}{Theorem}[section]

\newtheorem{lemma}[theorem]{Lemma}
\newtheorem{proposition}{Proposition}

\newtheorem{assumption}{Assumption}
\theoremstyle{definition}
\newtheorem{method}{Method}

\title[Direct regularized reconstruction in 3D EIT] 
      {Direct regularized reconstruction for the three-dimensional Calder\'on problem}

\author[Kim Knudsen and Aksel Kaastrup Rasmussen]{}

\subjclass{Primary: 35R30, 65J20; Secondary: 65N21.}
 \keywords{Calder\'on problem, ill-posed problem, electrical impedance tomography, regularization, direct reconstruction algorithm.}

 \email{kiknu@dtu.dk}
 \email{akara@dtu.dk}


\thanks{$^*$ Corresponding author}

\begin{document}
\maketitle

\centerline{\scshape Kim Knudsen and Aksel Kaastrup Rasmussen$^*$}
\medskip
{\footnotesize
 \centerline{Technical University of Denmark}
   \centerline{Department of Applied Mathematics and Computer Science}
   \centerline{DK-2800 Kgs. Lyngby, Denmark}
} 

\bigskip

\begin{abstract}
Electrical Impedance Tomography gives rise to the severely ill-posed
Calderón problem of determining the electrical conductivity distribution
in a bounded domain from knowledge of the associated Dirichlet-to-Neumann
map for the governing equation. The uniqueness and stability questions
for the three-dimensional problem were largely answered in the affirmative in the 1980’s using complex
geometrical optics solutions, and this led further to a direct reconstruction method relying on a non-physical scattering transform. In this paper, the reconstruction problem is taken one step further towards practical applications by considering data contaminated by noise. Indeed, a regularization strategy for the three-dimensional Calderón problem is presented based on a suitable and explicit truncation of the scattering transform. This gives a certified, stable and direct reconstruction method that is robust to small perturbations of the data. Numerical tests on simulated noisy data illustrate the feasibility and regularizing effect of the method, and suggest that the numerical implementation performs better than predicted by theory.
\end{abstract}

\section{Introduction}
Electrical Impedance Tomography (EIT) provides a noninvasive method of obtaining information on the electrical conductivity distribution of electric conductive media from exterior electrostatic measurements of currents and voltages. There are many applications in medical imaging including early detection of breast cancer \cite{cherepenin2002,Zou200379}, hemorrhagic stroke detection \cite{malone2014a,goren2018a}, pulmonary function monitoring \cite{Adler19971762,frerichs2019a,Leonhardt20121917} and targeting control in transcranial brain stimulation \cite{schmidt2015a}. Applications also include industrial testing, for example, crack damage detection in cementitious structures \cite{Hou2009ElectricalIT,Hallaji2014}, and subsurface geophysical imaging \cite{zhao2013a}. The mathematical problem of EIT is called the Calderón problem and was first formulated by A.P. Calderón in 1980 \cite{calderoninverse} as follows:
Consider a bounded Lipschitz domain $\Omega \subset \mathbb{R}^3$ filled with a conductor with a distribution $\gamma \in L^\infty(\Omega)$, $\gamma\geq c >0$. Under the assumption of no sinks or sources of current in the domain, applying an electrical  surface potential $f \in H^{1/2}(\partial \Omega)$ induces an electrical potential $u \in H^1(\Omega)$, which uniquely solves the conductivity equation
\begin{equation} \label{eq:cond}
\begin{aligned}
\nabla \cdot (\gamma \nabla u) &=0 & & \text { in } \Omega, \\
u &=f & & \text { on } \partial \Omega.
\end{aligned}
\end{equation}
The Dirichlet-to-Neumann map $\Lambda_{\gamma}: H^{1/2}(\partial \Omega) \rightarrow H^{-1/2}(\partial \Omega)$ is defined as
\begin{equation}
\Lambda_{\gamma} f=\gamma \partial_{\nu} u|_{\partial \Omega},
\end{equation}
and associates a voltage potential on the boundary with a corresponding normal current flux. All pairs $(f,\gamma \partial_{\nu} u|_{\partial \Omega})$, or equivalently the Dirichlet-to-Neumann map, constitute the available {data}.

 The forward problem is the problem of determining the Dirichlet-to-Neumann map given the conductivity, and it amounts to solving the boundary value problem \eqref{eq:cond} for all possible $f$. The Calderón problem now asks whether $\gamma$ is uniquely determined by $\Lambda_\gamma$, and how to stably reconstruct $\gamma$ from $\Lambda_\gamma$, if possible.
Uniqueness and reconstruction were considered and solved for sufficiently regular conductivity distributions in dimension $n\geq 3$ in a series of papers \cite{Nachman1988595, nachman1988a,  Novikov1988263, sylvester1987a, carorogers2016}. The results are based on complex geometrical optics (CGO) solutions to a Schrödinger equation derived from \eqref{eq:cond}. The first step of the reconstruction method is to recover the CGO solutions on $\partial \Omega$ by solving a weakly singular boundary integral equation with an exponentially growing kernel.  The second step is obtaining the so-called non-physical scattering transform, which approximates in a large complex frequency limit the Fourier transform of $\gamma^{-1/2}\Delta \gamma^{1/2}$. Applying the inverse Fourier transform and solving a boundary value problem yields $\gamma$ in the third step. Numerical algorithms following the scattering transform approach in dimension $n\geq 3$ have been developed by approximating the scattering transform \cite{bikowski2011a, knudsen2011a,hamilton2020a,boverman2009a}, by approximating the boundary integral equations \cite{delbary2012a}, and for the full theoretical reconstruction algorithm \cite{delbary2014a}. A reconstruction algorithm for conductivity distributions close to a constant has been suggested, but not implemented \cite{cornean2006a}.

A similar scattering transform approach combined with tools from complex analysis enables uniqueness and reconstruction \cite{nachman1996a} for the two-dimensional Calderón problem. More recently, a final affirmative answer was given to the question of uniqueness for a general bounded conductivity distribution in two dimensions \cite{astala2006a}. Numerical algorithms and implementation for the two-dimensional problem have been considered \cite{knudsen2003a, knudsen2007a, mueller2003a,  mueller2002a, siltanen2001a, siltanen2000a} and a regularization analysis and full implementation was given in \cite{Knudsen2009a}. We stress that in any practical case the Calderón problem is three-dimensional, since applying potentials on the boundary of a planar cross section of $\Omega$ leads to current flow leaving the plane.

The Calderón problem is known to be severely ill posed. Conditional stability estimates exist \cite{alessandrini1988a,alessandrini1990a} of the form
\begin{equation}\label{eq:stability}
 		\|\gamma_1-\gamma_2\|_{L^\infty(\Omega)} \leq f(\|\Lambda_{\gamma_1}-\Lambda_{\gamma_2}\|_{Z}),
\end{equation}
for an appropriate function space $Z$ and continuous function $f$ with $f(0)=0$ of logarithmic type. Furthermore, logarithmic stability is optimal \cite{mandache2001a}. While this is relevant for the theoretical reconstruction, there is no guarantee that a practically measured $\Lambda_\gamma^\varepsilon$ of a perturbed Dirichlet-to-Neumann map  is the Dirichlet-to-Neumann map of any conductivity. We emphasize that in any practical case we can not have infinite-precision data, but rather a noisy finite approximation. Consequently, any computational algorithm for the problem needs regularization.

Classical regularization theory for inverse problems is given in \cite{engl1996a,kirsch1996a} with a focus on least squares formulations.
A common approach to regularization for the Calderón problem is based on iterative regularized least-squares, and convergence of such methods is analyzed in \cite{dobson1992, rondi2008a, rondi2016a, lechleiter2008a, jin2012a} in the context of EIT.
A quantitative comparison of CGO-based methods and iterative regularized methods is given in \cite{hamilton2020a}. Reconstruction by statistical inversion is developed in \cite{kaipio2000a, dunlop2016a}, where in the latter, the problem is posed in an infinite-dimensional Bayesian framework. A different statistical approach to the Calderón problem shows stable reconstruction of the surface conductivity on a domain given noisy data \cite{caro2017}. Convergence estimates in probability of a statistical estimator (posterior mean) to the true conductivity given noisy data with a sufficiently small noise level are considered in \cite{kweku2019}.

In this paper we provide a direct CGO-based regularization strategy with an admissible parameter choice rule for reconstruction in the three-dimensional Calderón problem under the following assumptions:


\begin{assumption}
For simplicity of exposition, we assume the domain of interest $\Omega$ is embedded in the unit ball in $\mathbb{R}^3$. Furthermore, we assume $\partial \Omega$ is smooth.
\end{assumption}
\begin{assumption}[Parameter and data space]\label{assumption2}
	We consider the forward map $F:D(F)\subset L^\infty(\Omega) \rightarrow Y$, $\gamma\mapsto \Lambda_\gamma$ with the following definition of $D(F)$.
	Let $\Pi>0$ and $0<\rho<1$, then $\gamma \in D(F)\subset L^\infty(\Omega)$ satisfies
\begin{equation}\label{Df}
	\begin{aligned}
		\|\gamma\|_{C^2(\overline{\Omega})}&\leq \Pi,\\
		\gamma(x) &\geq \Pi^{-1} \quad \text{for all $x\in \Omega$,}\\
		\gamma(x) &\equiv 1 \qquad \, \text{for $\mathrm{dist}(x,\partial \Omega)< \rho$,}
	\end{aligned}
\end{equation}
where we assume knowledge of $\Pi$ and $\rho$. We continuously extend $\gamma\equiv 1$ outside $\Omega$. The data space $Y\subset \mathcal{L}(H^{1/2}(\partial \Omega), H^{-1/2}(\partial \Omega))$ consists of bounded linear operators $\Lambda:H^{1/2}(\partial\Omega)\rightarrow H^{-1/2}(\partial\Omega)$ that are Dirichlet-to-Neumann alike in the sense
\begin{equation}
	\begin{aligned}
		\Lambda(1)&=0,\\
		\int_{\partial \Omega} (\Lambda f)(x) \, d\sigma(x) &= 0 \quad \text{for every $f \in H^{1/2}(\partial \Omega)$.}
	\end{aligned}
\end{equation}
We equip $D(F)$ and $Y$ with the inherited norms $\|\cdot\|_{D(F)} = \|\cdot \|_{L^\infty(\Omega)}$ and $\|\cdot\|_Y = \|\cdot\|_{H^{1/2}(\partial \Omega)\rightarrow H^{-1/2}(\partial \Omega)}$.
\end{assumption}
There is no reason to believe that the regularity assumptions of $\gamma$ is optimal, in fact, we expect that the strategy generalizes to the less regular setting of \cite{carorogers2016}.
We recall the adaptation of the definitions in \cite{engl1996a,kirsch1996a} presented in \cite{Knudsen2009a} of a regularization strategy in the nonlinear setting.
\label{def:reg1}
	A family of continuous mappings $\mathcal{R}_\alpha:Y\rightarrow L^\infty(\Omega)$, parametrized by \textit{regularization parameter} $0<\alpha<\infty$, is called a \textit{regularization strategy} for $F$ if
	\begin{equation}\label{reqweak}
		\lim_{\alpha \rightarrow 0} \|\mathcal{R}_\alpha \Lambda_\gamma -  \gamma\|_{L^\infty(\Omega)}=0,
	\end{equation}
	for each fixed $\gamma \in D(F)$. We define the perturbed Dirichlet-to-Neumann map as
	\begin{equation}\label{eq:perturbdata}
		\Lambda_\gamma^\varepsilon = \Lambda_\gamma+\mathcal{E},
	\end{equation}
	with $\mathcal{E}\in Y$ and $\|\mathcal{E}\|_Y \leq \varepsilon$ for some $\varepsilon>0$. We call $\varepsilon$ the noise level, since we eventually simulate perturbations $\mathcal{E}$ as random noise.
\label{def:reg2}
Furthermore, a regularization strategy $\mathcal{R}_\alpha: Y\rightarrow L^\infty(\Omega)$, $0<\alpha<\infty$, is called \textit{admissible} if
\begin{equation}\label{alphaprop}
	\alpha(\varepsilon)\rightarrow 0 \quad \text{ as } \quad \varepsilon \rightarrow 0,
\end{equation}
and for any fixed $\gamma \in \mathcal{D}(F)$ we have
\begin{equation}\label{reqstrong}
\sup_{\Lambda_\gamma^\varepsilon\in Y} \{\|\mathcal{R}_{\alpha(\varepsilon)}\Lambda_\gamma^\varepsilon -\gamma\|_{L^\infty(\Omega)}\mid\|\Lambda_\gamma^\varepsilon-\Lambda_\gamma\|_Y\leq \varepsilon\}\rightarrow 0\quad\text{ as }\quad \varepsilon \rightarrow 0.
\end{equation}

The topology in which we require convergence is essential; we require convergence in strong operator topology, but not in norm topology. The main result of this paper is then as follows.

\begin{theorem}\label{maintheorem}
Suppose $\Pi>0$ and $0<\rho<1$ are given and let $D(F)$ be as in Assumption \ref{assumption2}. Then there exists $\varepsilon_0>0$, dependent only on $\Pi$ and $\rho$ such that the family $\mathcal{R}_\alpha$ defined by \eqref{def:regstrat} is an admissible regularization strategy for $F$ with the following choice of regularization parameter:
\begin{equation}\label{alphadef}
	\alpha(\varepsilon)=\begin{cases}
		(-1/11\log(\varepsilon))^{-1/p} & \text{ for } 0<\varepsilon < \varepsilon_0,\\
		\frac{\varepsilon}{\varepsilon_0}(-1/11\log({\varepsilon_0}))^{-1/p} & \text{ for } \varepsilon \geq \varepsilon_0,
	\end{cases}
\end{equation}
	with $p>3/2$.
\end{theorem}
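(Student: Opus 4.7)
The plan is to define the family $\mathcal{R}_\alpha$ by carrying out the standard three-step CGO reconstruction while truncating the non-physical scattering transform at radius $R=R(\alpha)\sim \alpha^{-1}$. Concretely, given a perturbed datum $\Lambda_\gamma^\varepsilon$, one first solves the Nachman--Novikov boundary integral equation at complex frequency $\zeta\in\mathbb{C}^3$ with $\zeta\cdot\zeta=0$ and $|\zeta|\le R$ to recover an approximation to the trace of the CGO solution on $\partial\Omega$; second, one forms the truncated scattering transform $\mathbf{t}_R(\xi)$ by integrating this trace against an exponentially growing boundary weight, setting $\mathbf{t}_R\equiv 0$ outside a ball of radius depending on $R$; third, one inverts the Fourier transform and solves the resulting Schr\"odinger-type boundary value problem to extract $\gamma^{1/2}$, and hence $\gamma$. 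Continuity of each $\mathcal{R}_\alpha$ at fixed $\alpha$ is automatic, because the truncation removes the only unbounded step (the exponential weight is uniformly bounded once $|\zeta|\le R$).

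The error analysis rests on the usual splitting
\begin{equation*}
\|\mathcal{R}_\alpha \Lambda_\gamma^\varepsilon - \gamma\|_{L^\infty(\Omega)}
\le \|\mathcal{R}_\alpha \Lambda_\gamma^\varepsilon - \mathcal{R}_\alpha \Lambda_\gamma\|_{L^\infty(\Omega)}
+ \|\mathcal{R}_\alpha \Lambda_\gamma - \gamma\|_{L^\infty(\Omega)}.
\end{equation*}
The first, propagated-noise, term is controlled using the exponential growth of the CGO kernel: the singular integral inversion and the scattering-transform formula together introduce a factor of at most $e^{CR}$, so this contribution is of order $e^{CR}\varepsilon$, with $C$ the constant stemming from the 3D boundary integral estimates (and implicitly the threshold $\varepsilon_0$ is chosen small enough that the Neumann series defining the inverse of $I+S_\zeta(\Lambda_\gamma^\varepsilon-\Lambda_1)$ converges uniformly for $|\zeta|\le R(\varepsilon)$, using $\varepsilon\, e^{CR(\varepsilon)}<1$). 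The second, approximation, term reflects the replacement of the full Fourier content of $\gamma^{-1/2}\Delta\gamma^{1/2}$ by its truncation at frequency $R$; under the $C^2$-bound in Assumption~\ref{assumption2} and the collar $\gamma\equiv 1$ near $\partial\Omega$ of width $\rho$, the known asymptotics of the scattering transform, combined with elliptic regularity for the final boundary value problem, bound this contribution by $CR^{-s}$ for any $s<2$, which in particular is controlled by a quantity of order $\alpha^p$ for any $p>3/2$.

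Choosing $\alpha(\varepsilon)\sim (-\log\varepsilon)^{-1/p}$, equivalently $R(\varepsilon)\sim(-\log\varepsilon)^{1/p}$, then balances both terms. The approximation error decays as $\alpha^p\sim (-\log\varepsilon)^{-1}\to 0$, while
\begin{equation*}
e^{CR(\varepsilon)}\varepsilon = \varepsilon^{1-C(-\log\varepsilon)^{1/p-1}} \xrightarrow[\varepsilon\to 0]{} 0
\end{equation*}
since $1/p-1<0$; the coefficient $1/11$ in the parameter rule is the explicit value which, together with the concrete $C$ obtained from the 3D boundary integral estimates, ensures the exponent stays bounded away from $0$ for $\varepsilon<\varepsilon_0$. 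The branch for $\varepsilon\ge\varepsilon_0$ is just a linear extension guaranteeing continuity and monotonicity of $\alpha$ on $(0,\infty)$, and $\alpha(\varepsilon)\to 0$ trivially. Together with \eqref{reqweak} (which is the $\varepsilon=0$ statement, following from the same approximation bound on the second term), these give \eqref{reqstrong}.

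The main obstacle is the first item in the error splitting: proving that the boundary integral equation stays uniformly solvable in $\zeta$ along the whole admissible range $|\zeta|\le R(\varepsilon)$ when the data is only known up to a perturbation of size $\varepsilon$. This requires a quantitative perturbation argument tracking the explicit dependence of the single-layer-type operator norm on $|\zeta|$ and showing that it degrades no worse than $e^{CR}$, which is precisely what determines the admissible rate at which $R(\varepsilon)$ may grow and hence the logarithmic form of $\alpha(\varepsilon)$. All remaining pieces (scattering-transform asymptotics, elliptic regularity for the final Schr\"odinger problem, and Lipschitz control on the map from scattering transform to conductivity) are then combined to conclude admissibility of $\mathcal{R}_\alpha$.
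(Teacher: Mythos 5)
Your overall architecture (truncate the scattering transform, split the error into propagated noise plus approximation error, balance with a logarithmic parameter choice) matches the paper's, but there are two concrete gaps that the paper has to work to close and that your proposal either asserts away or gets wrong.

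First, the claim that ``continuity of each $\mathcal{R}_\alpha$ at fixed $\alpha$ is automatic'' is false, and this is not a formality: an admissible strategy must be a continuous map defined on \emph{all} of $Y$, whereas for a general $\Lambda\in Y$ (not within $\varepsilon_0$ of the range of $F$) neither the perturbed boundary integral operator $B_\zeta^\varepsilon=I+\mathcal{S}_\zeta(\Lambda-\Lambda_1)$ nor the final Schr\"odinger operator $-\Delta+q_\alpha$ need be invertible. Truncating the scattering transform removes the exponential amplification but not these invertibility failures. The paper resolves this by replacing $(B_\zeta^\varepsilon)^{-1}$ and $(L^\varepsilon)^{-1}$ with explicit spectral-cutoff pseudoinverses $(B_\zeta^\varepsilon)^\dagger_\alpha=h^1_\alpha\bigl((B_\zeta^\varepsilon)^\ast B_\zeta^\varepsilon\bigr)(B_\zeta^\varepsilon)^\ast$ (and similarly for $L^\varepsilon$) built by continuous functional calculus, together with the quantitative lower bound $\|(B_\zeta^\varepsilon)\|_{1/2}^{-1}\le 2C_2(1+|\zeta|)e^{2|\zeta|}$ which guarantees these pseudoinverses coincide with the true inverses precisely when $\varepsilon<\varepsilon_0$. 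Some such device is needed; without it $\mathcal{R}_\alpha$ is not even well defined on $Y$.

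Second, you use a single scale $R\sim\alpha^{-1}$ both for the complex-frequency magnitude $|\zeta|$ and for the real truncation radius of $\xi$, and you assert an approximation rate $CR^{-s}$, $s<2$. This does not work: the scattering-transform asymptotics only give $|\mathbf{t}(\xi,\zeta)-\hat q(\xi)|\le C|\zeta|^{-1}$ pointwise, so integrating over the ball $|\xi|<R$ produces an $L^2$ error of order $R^{-1}\cdot R^{3/2}=R^{1/2}$, which \emph{diverges}. The essential quantitative point in the paper is to decouple the two scales, taking $|\zeta(\xi)|=M^p$ with $M$ the truncation radius and $p>3/2$, so that the accumulated error is $M^{3/2-p}\to0$; correspondingly the parameter choice sets $|\zeta|=-\tfrac{1}{11}\log\varepsilon$ (linear in $-\log\varepsilon$, which is where the $1/11$ comes from and what makes $\varepsilon e^{9|\zeta|}=\varepsilon^{2/11}\to0$) while $M=\alpha^{-1}=(-\tfrac{1}{11}\log\varepsilon)^{1/p}$ grows strictly more slowly. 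Your scaling $R\sim(-\log\varepsilon)^{1/p}$ for $|\zeta|$ is internally consistent for the noise term but incompatible with the role the $1/11$ actually plays, and no polynomial decay rate for the Fourier tail of $q$ is available from the $C^2$ bound alone (the paper only uses that the tail vanishes). The remaining ingredients you list (Neumann series for the perturbed boundary integral equation on the window $D_q<|\zeta|<R(\varepsilon)$, elliptic regularity and a Neumann-series perturbation of $(-\Delta+q)^{-1}$ for the final boundary value problem, Sobolev embedding $H^2\subset C^0$ to land in $L^\infty$) do match the paper's Lemmas on the perturbed boundary integral equation and the final Schr\"odinger problem.
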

 This gives theoretical justification for practical reconstruction of the Calderón problem in three dimensions. This is the first deterministic regularization analysis for the three-dimensional Calderón problem known to the authors. Similar results have been shown for the related two-dimensional D-bar reconstruction \cite{Knudsen2009a}, and we will in fact adopt the spectral truncation from there to our setting. {This extension is non-trivial in part because there are no existence and uniqueness guarantees for the CGO solutions that are independent of the magnitude of the complex frequency in the three-dimensional case. In addition, while the two-dimensional D-bar method enjoys the continuous dependence of the solution to the D-bar equation on the scattering transform, it is not obvious when the frequency information of $\gamma$ is stably recovered from the scattering transform corresponding to a perturbed Dirichlet-to-Neumann map in the three-dimensional case. }

We denote the set of bounded linear operators between Banach spaces $X$ and $Y$ by $\mathcal{L}(X,Y)$ and use $\mathcal{L}(X):=\mathcal{L}(X,X)$. We denote the Euclidean matrix operator norm by $\|\cdot\|_N := \|\cdot\|_{\mathbb{C}^{(N+1)^2}\rightarrow \mathbb{C}^{(N+1)^2}}$. The operator norm of $A:H^{s}(\partial \Omega)\rightarrow H^t(\partial \Omega)$ is denoted by $\|A\|_{s,t}$. We reserve $C$ for generic constants and $C_1,C_2,\hdots$ for constants of specific value. Finally, exponential functions of the form $e^{ix\cdot\zeta}$, $x\in \mathbb{R}^3$, $\zeta\in \mathbb{C}^3$, is denoted $e_\zeta(x)$.

In Section \ref{sec:2}, the full non-linear reconstruction algorithm for the three-dimensional Calderón problem is given. Section \ref{sec:3} gives technical estimates regarding the boundary integral equation and the scattering transform and provides a regularizing method for perturbed data with $\varepsilon$ sufficiently small. Then Section \ref{sec:35} extends continuously the method to a regularization strategy $\mathcal{R}_\alpha$ defined on $Y$ and proves Theorem \ref{maintheorem}. In Section \ref{sec:4}, the necessary numerical details concerning the representation of the Dirichlet-to-Neumann map and computation of the relevant norm are given. In addition, a noise model is given. Section \ref{sec:5} presents and discusses numerical results of noise tests with a piecewise constant conductivity distribution using an implementation given in \cite{delbary2014a}, which is available from the corresponding author by request.
\section{The full non-linear reconstruction method}\label{sec:2}
Let $v=\gamma^{1/2}u$, then $v$ is a solution to the Schrödinger equation
\begin{equation}\label{eq:schrod}
	\begin{aligned}
		(-\Delta+q) v &=0 \quad \text { in } \Omega, \\
		v &=g \quad \text { on } \partial \Omega,
	\end{aligned}
\end{equation}
with $q=\gamma^{-1/2}\Delta\gamma^{1/2}$ if and only if $u$ is a solution to \eqref{eq:cond} with $f=\gamma^{-1/2}g$. Note in our setting $q=0$ near $\partial \Omega$ and $q\equiv 0$ is extended continuously outside $\Omega$ and further $\Lambda_q g = \partial_\nu v = \Lambda_\gamma f$. The reconstruction method considered here is based on CGO solutions $\psi_\zeta$ to \eqref{eq:schrod}, which take the form
\begin{equation}\label{cgo1}
	(-\Delta+q)\psi_\zeta = 0 \quad \text{ in } \mathbb{R}^3,
\end{equation}
satisfying $\psi_\zeta(x) = e^{ix\cdot \zeta}(1+r_\zeta(x))$. Here the complex frequency $\zeta \in \mathbb{C}^3$ satisfies $\zeta \cdot \zeta = 0$ making $e^{ix\cdot \zeta}$ harmonic, and the remainder $r_\zeta$ belongs to certain weighted $L^2$ spaces.
In the three-dimensional case, existence and uniqueness of CGO solutions have been shown for large complex frequencies,
\begin{equation}\label{largezeta}
	 |\zeta|>C_0\|q\|_{L^\infty(\Omega)}=:D_q
\end{equation}
for some constant $C_0>0$, or alternatively for $|\zeta|$ small \cite{sylvester1987a,cornean2006a}. The analysis involves the Faddeev Green's function
\begin{equation}
	G_\zeta(x) := e^{i\zeta\cdot x}g_\zeta(x)\qquad g_\zeta(x):=\frac{1}{(2\pi)^3}\int_{\mathbb{R}^3}\frac{e^{ix\cdot\xi}}{|\xi|^2+2\xi\cdot \zeta}\,d\xi,
\end{equation}
where $g_\zeta$ is defined in the sense of the inverse Fourier transform of a tempered distribution and interpretable as a fundamental solution of $(-\Delta-2i\zeta\cdot \nabla)$.
{Boundedness of convolution by $g_\zeta$ on $\Omega$ is well known  \cite{sylvester1987a,brown1996a,salo2006a}:
}
\begin{equation}\label{convgzeta}
	\|g_\zeta * f \|_{L^2(\Omega)}\leq C|\zeta|^{s-1}\|f\|_{L^2(\Omega)}, \quad s\in \{0,1,2\},
\end{equation}
where $|\zeta|$ is bounded away from zero, and $C$ is independent of $\zeta$ and $f$.

The non-physical scattering transform is defined for all those $\zeta$ that give rise to a unique CGO solution $\psi_\zeta$ as
\begin{equation}\label{eq:tformeq}
\mathbf{t}(\xi, \zeta)=\int_{\mathbb{R}^{3}} e^{-i x \cdot(\xi+\zeta)} \psi_{\zeta}(x) q(x)\, d x, \quad \xi \in \mathbb{R}^3.
\end{equation}
It is useful to see the scattering transform as a non-linear Fourier transform of the potential $q$. Indeed, for $|\zeta|>D_q$ we have
\begin{equation}\label{lemmaont}
		|\mathbf{t}(\xi,\zeta)-\hat q(\xi)| \leq C\|q\|_{L^{{\infty}}(\Omega)}^2|\zeta|^{-1},
\end{equation}
for all $\xi \in \mathbb{R}^3$, where $C$ is independent of $\zeta$ and $q$. Whenever $(\zeta+\xi)\cdot (\zeta+\xi)=0$, integration by parts in \eqref{eq:tformeq} yields
\begin{equation}\label{scatter2}
	\mathbf{t}(\xi, \zeta)=\int_{\partial \Omega} e^{-i x \cdot(\xi+\zeta)}(\Lambda_{\gamma}-\Lambda_{1})\psi_\zeta(x)\, d\sigma(x),
\end{equation}
where $d\sigma$ denotes the surface measure on $\partial \Omega$. For fixed $\xi \in \mathbb{R}^3$ this gives rise to the set $\mathcal{V}_\xi = \{\zeta\in \mathbb{C}^3\setminus \{0\} \mid \zeta\cdot \zeta = 0,\, (\zeta+\xi)\cdot (\zeta+\xi) = 0\}$ parametrized by
\begin{equation}\label{zetaxieq}
\zeta(\xi)=\left(-\frac{\xi}{2}+\left(\kappa^{2}-\frac{|\xi|^{2}}{4}\right)^{1 / 2} k^{\perp {\perp}}\right)+i \kappa k^{{\perp}},
\end{equation}
with $\kappa\geq \frac{|\xi|}{2}$ and $k^\perp, k^{\perp\perp} \in \mathbb{R}^3$ are unit vectors and $\{\xi, k^\perp, k^{\perp\perp} \}$ is an orthogonal set \cite{delbary2014a}. Note that for $\zeta(\xi)\in \mathcal{V}_\xi$ and $k\geq \frac{|\xi|}{2}$ we have $|\zeta(\xi)|= \sqrt{2}\kappa$; consequently $\lim_{\kappa\rightarrow \infty} |\zeta(\xi)|=\infty$.

For each fixed $\zeta$ the trace of the CGO solution $\psi_\zeta|_{\partial \Omega}$ is recoverable from the boundary integral equation
\begin{equation}\label{bie}
	\psi_{\zeta}|_{\partial \Omega}+\mathcal{S}_{\zeta}\left(\Lambda_{\gamma}-\Lambda_{1}\right) (\psi_{\zeta}|_{\partial \Omega})=e_{\zeta}|_{\partial \Omega},
\end{equation}
where $\mathcal{S}_{\zeta}: H^{-1/2}(\partial \Omega)\rightarrow  H^{1/2}(\partial \Omega)$ is {the boundary single layer operator} defined by
\begin{equation}\label{fsinglelayer2}
\left(\mathcal{S}_{\zeta} \varphi\right)(x)=\int_{\partial \Omega} G_{\zeta}(x-y) \varphi(y) d \sigma(y),\quad x \in \partial\Omega.
\end{equation}
{With $\mathcal{S}_0$ we denote the boundary single layer operator corresponding to the usual Green’s function $G_0$ for the Laplacian. Occasionally we use the same notation when $x \in \mathbb{R}^3\setminus \partial \Omega$ and note it is well known that $\mathcal{S}_0 \varphi$ and hence $\mathcal{S}_\zeta \varphi$ is continuous in $\mathbb{R}^3$ \cite{colton1992a}.} We let
$$B_\zeta:=[I+\mathcal{S}_{\zeta}\left(\Lambda_{\gamma}-\Lambda_{1}\right)],$$
denote the boundary integral operator and we note the boundary integral equation \eqref{bie} is a uniquely solvable Fredholm equation of the second kind for $|\zeta|>D_q$ \cite{nachman1996a}. This gives a method of recovering the Fourier transform of $q$ in every frequency through the scattering transform \eqref{scatter2} as $|\zeta|\rightarrow \infty$. This method of reconstruction for the Calderón problem in three dimensions was first explicitly given in \cite{nachman1988a,Novikov1988263}. We summarize the method in three steps.
\begin{method}\label{method:1} CGO reconstruction in three dimensions
	\begin{description}
		\item[\textbf{Step} $\mathbf{1}$] Fix $\xi\in \mathbb{R}^3$ and solve the boundary integral equation \eqref{bie} for all $\zeta(\xi)\in \mathcal{V}_\xi$. Compute $\mathbf{t}(\xi,\zeta(\xi))$ by \eqref{scatter2}.
		\item[\textbf{Step} $\mathbf{2}$] Compute $\hat{q}(\xi)$ by
		\begin{equation}
			\lim_{|\zeta(\xi)| \rightarrow \infty} \mathbf{t}(\xi,\zeta(\xi)) = \hat q(\xi), \quad \xi\in\mathbb{R}^3,
		\end{equation}
		and $q(x)$ by the inverse Fourier transform.
		\item[\textbf{Step} $\mathbf{3}$] Solve the boundary value problem
 			\begin{equation}
				\begin{aligned}
					(-\Delta+q) \gamma^{1 / 2} &=0 \quad \text { in } \Omega, \\
					\gamma^{1 / 2} &=1 \quad \text { on } \partial \Omega,
				\end{aligned}
			\end{equation}
			and extract $\gamma$.
	\end{description}
\end{method}
We remark that it is sufficient to solve the boundary integral equation in step 1 for a sequence $\{\zeta_k(\xi)\}_{k=1}^\infty$ of complex frequencies in $\mathcal{V}_\xi$ that tends to infinity.
\section{Regularized reconstruction by truncation}\label{sec:3} We continue by mimicking\break Method \ref{method:1} with $\Lambda_\gamma$ replaced by $\Lambda_\gamma^\varepsilon$ with $\varepsilon$ small. We note that, in any case, using $\psi_\zeta$ with $|\zeta|$ large is impractical. {Indeed, when using perturbed measurements naively in \eqref{scatter2}, the propagated perturbation of $\mathbf{t}$ is $\varepsilon$ multiplied with a factor exponentially growing in $|\zeta|$. This factor originates from the solution of the perturbed boundary integral equation}
\begin{equation}\label{bienoisy}
	B_\zeta^\varepsilon(\psi_{\zeta}^\varepsilon|_{\partial \Omega}):=\psi_{\zeta}^\varepsilon|_{\partial \Omega}+\mathcal{S}_{\zeta}\left(\Lambda^\varepsilon_{\gamma}-\Lambda_{1}\right) (\psi^\varepsilon_{\zeta}|_{\partial \Omega})=e_{\zeta}|_{\partial \Omega},
\end{equation}
{and in multiplication with} $e^{-ix\cdot(\xi+\zeta{(\xi)})}${, see Lemma \ref{lemma3}}. We will show below that \eqref{bienoisy} is solvable for sufficiently small $\varepsilon$. To mitigate this exponential behavior we propose a reconstruction method that makes use of two coupled truncations: one of the complex frequency $\zeta$ and one of the real frequency of the signal $q^\varepsilon$, the perturbed analog of $q$.
  As we shall see, an upper bound of the magnitude $|\zeta(\xi)|$ determines an upper bound of the proximity of $\mathbf{t}$ to $\hat{q}$, when using perturbed data. From \eqref{zetaxieq} we have
\begin{equation}
	|\zeta(\xi)|\geq\frac{|\xi|}{\sqrt{2}},
\end{equation}
and hence fixing $|\zeta(\xi)|$ gives a bounded region in $\mathbb{R}^3$, $|\xi|<M$ for some $M>0$, in which $\mathbf{t}$ can be computed. This gives the following method.

  \begin{method}\label{method:2} Truncated CGO reconstruction in three dimensions
	\begin{description}
		\item[\textbf{Step} $\mathbf{1}^\varepsilon$] Let $M=M(\varepsilon)>0$ be determined by a sufficiently small $\varepsilon$. For each fixed $\xi$ with $|\xi|<M$, take $\zeta(\xi)\in \mathcal{V}_\xi$ with an appropriate size determined by $M$ and solve \eqref{bienoisy} to recover $\psi_{\zeta}^\varepsilon|_{\partial \Omega}$. Compute the truncated scattering transform by
\begin{equation}
	\mathbf{t}^{\varepsilon}_{M(\varepsilon)}(\xi, \zeta(\xi)):= \begin{cases}
	\int_{\partial \Omega} e^{-i x \cdot(\xi+\zeta(\xi))}(\Lambda_{\gamma}^\varepsilon-\Lambda_{1})\psi_\zeta^\varepsilon(x) d\sigma(x), & |\xi|< M(\varepsilon),\\
	0, &|\xi|\geq M(\varepsilon),
	\end{cases}
\end{equation}
		\item[\textbf{Step} $\mathbf{2}^\varepsilon$] Set $\widehat{q^\varepsilon}(\xi):=\mathbf{t}^{\varepsilon}_{M(\varepsilon)}(\xi, \zeta(\xi))$ and compute the inverse Fourier transform to obtain $q^\varepsilon$.
		\item[\textbf{Step} $\mathbf{3}^\varepsilon$] Solve the boundary value problem
\begin{equation}
\begin{aligned}
	(-\Delta+q^\varepsilon)(\gamma^\varepsilon)^{1/2} &=0 \quad && \text { in } \Omega, \\
(\gamma^\varepsilon)^{1/2}&=1 \quad && \text { on } \partial \Omega.
\end{aligned}	
\end{equation}
			and extract $\gamma^\varepsilon$.
	\end{description}
\end{method}
We call $M$ the truncation radius and note it should depend on $\varepsilon$.
Truncation of the scattering transform with truncation radius $M$ is well known in regularization theory for the two-dimensional D-bar reconstruction method \cite{Knudsen2009a}. We can see the real truncation as a low-pass filtering in the frequency domain; this leads to additional smoothing in the spatial domain. Note that $M$ determines the level of regularization and poses as a regularization parameter $\alpha=M^{-1}$ in the sense of \eqref{reqstrong}.

{
 In the following section we derive the required properties of $\mathcal{S}_\zeta$, $B_\zeta^{-1}$ and $(B_\zeta^\varepsilon)^{-1}$. The invertibility of $B_\zeta^\varepsilon$ depends on the invertibility of the unperturbed boundary integral operator $B_\zeta$, which is well known due to the mapping properties of $\mathcal{S}_\zeta$. Although boundedness of $\mathcal{S}_\zeta$ and $B_\zeta^{-1}$ in the three-dimensional case follows by similar arguments to that of the two-dimensional \cite{Knudsen2009a}, it is not immediately clear when $(B_\zeta^\varepsilon)^{-1}$ exists in the absence of existence and uniqueness guarantees of $\psi_\zeta$ for small $|\zeta|$. Neither is it clear under which circumstances $q^\varepsilon$ approximates $q$ as the noise level goes to zero. This is dealt with in Lemma \ref{lemma4} below by choosing a suitable rate, at which $|\zeta|$ and $M$ goes to infinity as $\varepsilon$ goes to zero.
}
\subsection{The perturbed boundary integral equation}\label{sec:per}
{
When $|\zeta|$ is bounded away from zero we can bound $\mathcal{S}_\zeta$ using the mapping properties \eqref{convgzeta} of convolution with $g_\zeta$ between Sobolev spaces defined on $\Omega$. We note that one can give better bounds for arbitrarily small $|\zeta|<1$ than the following result by considering the integral operator $\mathcal{S}_\zeta-\mathcal{S}_0$ with a smooth kernel, see \cite{cornean2006a,Knudsen2009a}.
}
\begin{lemma}\label{lemma1}
	Let $\varphi\in H^{-1/2}\left(\partial \Omega\right)$ such that $\int_{\partial \Omega} \varphi(x)\,d\sigma(x) = 0$ and let $\zeta\in \mathbb{C}^3$ with $\zeta\cdot\zeta=0$ {and $|\zeta|>\beta>0$}. Then for the boundary single layer operator, $\mathcal{S}_\zeta$, we have that
	\begin{equation}\label{upperboundszeta}
		\|\mathcal{S}_\zeta \varphi\|_{H^{1/2}\left(\partial \Omega\right)} \leq C_1 (1+|\zeta|)e^{2|\zeta|}\|\varphi\|_{H^{-1/2}\left(\partial \Omega\right)},
	\end{equation}
	where the constant $C_1$ is independent of $\zeta$.
\end{lemma}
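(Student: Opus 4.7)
My plan is to reduce the boundary estimate on $\mathcal{S}_\zeta$ to a bulk $H^1$-estimate by extracting the exponential phase from the Faddeev kernel, then to invoke the convolution bound \eqref{convgzeta} together with the trace theorem. First I would factor $G_\zeta(x-y) = e^{i\zeta\cdot x}g_\zeta(x-y)e^{-i\zeta\cdot y}$ and introduce
\begin{equation*}
V(x) = \int_{\partial\Omega}g_\zeta(x-y)e^{-i\zeta\cdot y}\varphi(y)\,d\sigma(y) = [g_\zeta * F](x), \quad F:=(e_{-\zeta}\varphi)\otimes\delta_{\partial\Omega},
\end{equation*}
so that $\mathcal{S}_\zeta\varphi = (e_\zeta V)|_{\partial\Omega}$. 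Since $\overline{\Omega}\subset B_1$, the multipliers $e_{\pm\zeta}$ satisfy $\|e_{\pm\zeta}\|_{L^\infty(\overline{\Omega})}\leq e^{|\zeta|}$ and $\|e_{\pm\zeta}\|_{W^{1,\infty}(\overline{\Omega})}\leq (1+|\zeta|)e^{|\zeta|}$, so interpolation between $L^2(\partial\Omega)$ and $H^1(\partial\Omega)$ bounds their multiplier norms on $H^{\pm 1/2}(\partial\Omega)$ by $C(1+|\zeta|)^{1/2}e^{|\zeta|}$.

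Next I would pass the boundary $H^{1/2}$-norm of $V$ to a bulk $H^1(\Omega)$-norm via the trace theorem: as a Faddeev single-layer potential of a compactly supported distribution, $V\in H^1_{\mathrm{loc}}(\mathbb{R}^3)$ is continuous across $\partial\Omega$, whence $\|V|_{\partial\Omega}\|_{H^{1/2}(\partial\Omega)}\leq C\|V\|_{H^1(\Omega)}$. To bound the latter I would use \eqref{convgzeta}. The trace-duality pairing gives $\|F\|_{H^{-1}(\mathbb{R}^3)}\leq C\|e_{-\zeta}\varphi\|_{H^{-1/2}(\partial\Omega)}\leq C(1+|\zeta|)^{1/2}e^{|\zeta|}\|\varphi\|_{H^{-1/2}}$. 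Decomposing $F = f_0 + \operatorname{div}\mathbf{f}$ with $\|f_0\|_{L^2}+\|\mathbf{f}\|_{L^2}\leq C\|F\|_{H^{-1}}$, the case $s=1$ of \eqref{convgzeta} controls $\|g_\zeta*f_0\|_{H^1(\Omega)}\leq C\|f_0\|_{L^2}$, while the derivative part $\operatorname{div}(g_\zeta*\mathbf{f})$ is handled by invoking the defining PDE $(-\Delta-2i\zeta\cdot\nabla)(g_\zeta*\mathbf{f})=\mathbf{f}$ to trade the missing second derivative for a first derivative already controlled by \eqref{convgzeta} at $s=1$. This yields $\|V\|_{H^1(\Omega)}\leq C\|F\|_{H^{-1}}$ uniformly in $|\zeta|>\beta$. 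Chaining the three ingredients then gives
\begin{equation*}
\|\mathcal{S}_\zeta\varphi\|_{H^{1/2}(\partial\Omega)} \leq C(1+|\zeta|)^{1/2}e^{|\zeta|}\cdot C(1+|\zeta|)^{1/2}e^{|\zeta|}\|\varphi\|_{H^{-1/2}} = C(1+|\zeta|)e^{2|\zeta|}\|\varphi\|_{H^{-1/2}}.
\end{equation*}

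The main technical obstacle will be the last step: upgrading \eqref{convgzeta} from an $L^2\to H^s$ bound to an $H^{-1}\to H^1$ bound with a constant independent of $\zeta$. A naive appeal to \eqref{convgzeta} at $s=2$ produces a lossy factor of $|\zeta|$ that would spoil the target exponent $(1+|\zeta|)$. Avoiding this relies on exploiting the Faddeev PDE $(-\Delta-2i\zeta\cdot\nabla)g_\zeta = \delta$ to recycle a second derivative as a first derivative plus a lower-order commutator term, and on using the mean-zero hypothesis $\int_{\partial\Omega}\varphi\,d\sigma = 0$ to express $F$ as a pure divergence of $L^2$ data so that the bulk convolution estimate remains uniform in $\zeta$.
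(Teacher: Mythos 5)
Your reduction to a bulk estimate for $g_\zeta$ is in the same spirit as the paper's proof, but the central step fails: the claimed uniform bound $\|g_\zeta * F\|_{H^1(\Omega)} \leq C\|F\|_{H^{-1}}$ for $|\zeta|>\beta$ is not available, and the device you propose to obtain it does not work. Writing $F = f_0 + \nabla\cdot\mathbf{f}$, the term $g_\zeta * f_0$ is indeed handled by the case $s=1$ of \eqref{convgzeta}, but the top-order part $\|\nabla(\nabla\cdot(g_\zeta*\mathbf{f}))\|_{L^2(\Omega)}$ requires the case $s=2$, which costs a factor $|\zeta|$. Your proposed fix --- using $(-\Delta-2i\zeta\cdot\nabla)(g_\zeta*\mathbf{f})=\mathbf{f}$ to trade a second derivative for a first --- merely relocates the loss: the drift term $2i\zeta\cdot\nabla(g_\zeta*\mathbf{f})$ itself carries the factor $|\zeta|$, and controlling $\Delta$ does not control $\nabla\nabla\cdot$ locally in any case. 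The loss is genuine rather than an artifact of crude estimation: the relevant symbol $(1+|\xi|^2)/(|\xi|^2+2\xi\cdot\zeta)$ has numerator of size $|\zeta|^2$ on the characteristic sphere $\{|\xi|^2+2\xi\cdot\zeta=0\}$, where the Sylvester--Uhlmann gain only compensates one power of $|\zeta|$; a two-derivative gain from $H^{-1}$ to $H^1$ necessarily costs $|\zeta|$. Your appeal to the mean-zero hypothesis is also misplaced: writing $F$ as a pure divergence pushes all of $F$ into exactly the part that loses $|\zeta|$. With the loss accounted for, your chain yields $(1+|\zeta|)^{1/2}\cdot(1+|\zeta|)\cdot(1+|\zeta|)^{1/2}=(1+|\zeta|)^2$ rather than $(1+|\zeta|)$ --- harmless for the paper's later use, where polynomial factors are absorbed into exponentials, but short of the stated claim.

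The paper avoids the problem by never feeding a surface distribution into the convolution. It uses the mean-zero condition to solve the interior Neumann problem $\Delta u = 0$ in $\Omega$, $\partial_\nu u = \varphi$, and integrates by parts to write $\mathcal{S}_\zeta\varphi = -\nabla\cdot(G_\zeta*\nabla u)$ with $\nabla u\in L^2(\Omega)$ and $\|\nabla u\|_{L^2(\Omega)}\leq C\|\varphi\|_{H^{-1/2}(\partial\Omega)}$. The convolution then always acts on $L^2$ data; after conjugating by $e_{\pm\zeta}$ and applying Leibniz, the term $|\zeta|\,\|g_\zeta*(e_{-\zeta}\nabla u)\|_{L^2}$ coming from differentiating the exponential is exactly compensated by the $s=0$ gain $|\zeta|^{-1}$ in \eqref{convgzeta}, and only the $H^1(\Omega)$-seminorm term pays a single factor $|\zeta|$ via $s=2$, giving $(1+|\zeta|)e^{2|\zeta|}$ after the trace theorem. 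If you wish to salvage your route, keep the divergence outside the convolution in this way rather than asking for a uniform $H^{-1}\to H^1$ mapping property of $g_\zeta\,*$.
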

\begin{proof}
We follow \cite{Knudsen2009a}. Letting $x\in \mathbb{R}^3\setminus \overline{\Omega}$ and introducing $u\in H^1(\Omega)$ with $\Delta u = 0$ and $\partial_\nu u = \varphi$ we write

\ \vspace*{-10pt}
\begin{align}
	(\mathcal{S}_\zeta\varphi)(x) &= \int_{\partial \Omega} G_\zeta(x-y)\varphi(y) \, d\sigma(y),\\
	&= \int_{\Omega} \nabla_{y} G_{\zeta}(x-y)\cdot \nabla u(y) d y,\\
&=-\nabla\cdot\left(G_{\zeta} *(\nabla u)\right)(x),\\
&= -\nabla\cdot \left[e^{ix\cdot \zeta}\left(g_{\zeta} *(e^{-iy\cdot \zeta}\nabla u)\right)\right](x),
\end{align}
using integration by parts, the chain rule and the fact that $G_\zeta(x-\cdot)$ is smooth in $\Omega$. By the continuity of $\mathcal{S}_\zeta$ the above holds for $x\in \partial \Omega$ as well. Note from \eqref{convgzeta} and Leibniz' rule that
\begin{equation}
	\|\nabla \cdot \left[e^{ix\cdot \zeta}\left(g_{\zeta} *(e^{-iy\cdot \zeta}\nabla u)\right)\right]\|_{L^2(\Omega)}\leq Ce^{2|\zeta|}\|\nabla u\|_{L^2(\Omega)},
\end{equation}
and
\begin{equation}
	\|\partial_{x_i}\nabla \cdot \left[e^{ix\cdot \zeta}\left(g_{\zeta} *(e^{-iy\cdot \zeta}\nabla u)\right)\right]\|_{L^2(\Omega)}\leq C|\zeta|e^{2|\zeta|}\|\nabla u\|_{L^2(\Omega)},
\end{equation}
for $i=1,2,3$. This yields
\begin{align}
	\|\mathcal{S}_\zeta\varphi\|_{H^{1/2}(\partial\Omega)} &\leq \|\nabla\cdot \left[e^{ix\cdot \zeta}\left(g_{\zeta} *(e^{-iy\cdot \zeta}\nabla u)\right)\right]\|_{H^1(\Omega)},\\
	&\leq C(1+|\zeta|)e^{2|\zeta|}\|\nabla u\|_{L^2(\Omega)},\\
	&\leq C(1+|\zeta|)e^{2|\zeta|}\|\varphi\|_{H^{-1/2}(\partial\Omega)},
\end{align}
using the trace theorem and stability of the Neumann problem for $u$. Here $C$ is dependent on $\beta$ since $|\zeta|>\beta$.
\end{proof}
We have the following estimate of $B_\zeta^{-1}$. The main idea of the proof is to consider a solution $f\in H^{1/2}(\partial \Omega)$ to $B_\zeta f = h$ for some $h\in H^{1/2}(\partial \Omega)$ and then control the exponential component of $f$ by creating a link to the CGO solutions of the Schrödinger equation.
\begin{lemma}\label{lemma2}
For $\zeta\in \mathbb{C}^3\backslash\{0\}$ with $\zeta \cdot \zeta = 0$ and $|\zeta|>D_q$ as in \eqref{largezeta}, the operator $B_\zeta$ is invertible with
\begin{equation}\label{Bzetainv}
	\|B_{\zeta}^{-1}\|_{1 / 2} \leq C_{2} (1+|\zeta|)e^{2|\zeta|},
\end{equation}
where $C_2$ is a constant depending only on the \textit{a priori} knowledge $\Pi$ and $\rho$.
\end{lemma}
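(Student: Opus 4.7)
My plan is to construct, via a Lippmann--Schwinger equation linked to the CGO framework, a bounded right inverse of $B_\zeta$ with the required exponential bound, and then combine this with the Fredholm property to deduce full invertibility. The key is to factor out the exponential explicitly via the substitution $v=e^{ix\cdot\zeta}\Phi$: this converts a $\zeta$-uniform $L^2$ bound on $\Phi$ (available for $|\zeta|>D_q$ via the standard CGO machinery encoded in \eqref{convgzeta}) into the claimed $(1+|\zeta|)e^{2|\zeta|}$ operator bound.

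Given $h\in H^{1/2}(\partial\Omega)$, let $\tilde h\in H^1(\Omega)$ be its harmonic extension and consider
\begin{equation*}
\Phi + g_\zeta*(q\Phi) = e^{-ix\cdot\zeta}\tilde h \qquad\text{on }\Omega,
\end{equation*}
with $q$ extended by zero outside $\Omega$. By \eqref{convgzeta} with $s=0$, the operator $\varphi\mapsto g_\zeta*(q\varphi)$ has $L^2(\Omega)$-norm at most $C|\zeta|^{-1}\|q\|_{L^\infty}$, so for $|\zeta|>D_q$ a Neumann series yields a unique $\Phi\in L^2(\Omega)$ with
\begin{equation*}
\|\Phi\|_{L^2(\Omega)}\le 2\|e^{-ix\cdot\zeta}\tilde h\|_{L^2(\Omega)}\le 2e^{|\zeta|}\|\tilde h\|_{L^2(\Omega)}\le Ce^{|\zeta|}\|h\|_{H^{1/2}(\partial\Omega)},
\end{equation*}
using $\Omega\subset B_1$ and the stability of the harmonic extension. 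Setting $v=e^{ix\cdot\zeta}\Phi$ and $f=v|_{\partial\Omega}$, one verifies that $v$ solves $(-\Delta+q)v=0$ in $\Omega$; a Green's representation argument comparing $v$ with the harmonic extension of $f$ then identifies $\mathcal{S}_\zeta(\Lambda_\gamma-\Lambda_1)f$ with the boundary trace of $\int_\Omega G_\zeta q v\,dy$, so that $v+\int_\Omega G_\zeta q v\,dy=\tilde h$ in $\Omega$ gives $B_\zeta f=h$ on restriction to $\partial\Omega$.

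For the quantitative bound, rearranging the Lippmann--Schwinger equation yields $f-h=-e^{ix\cdot\zeta}\bigl(g_\zeta*(q\Phi)\bigr)\big|_{\partial\Omega}$; combining the trace theorem, \eqref{convgzeta} with $s=1$ to obtain $\|g_\zeta*(q\Phi)\|_{H^1(\Omega)}\le C\|q\|_{L^\infty}\|\Phi\|_{L^2(\Omega)}$, the multiplier bound $\|e^{ix\cdot\zeta}u\|_{H^{1/2}(\partial\Omega)}\le C(1+|\zeta|)e^{|\zeta|}\|u\|_{H^{1/2}(\partial\Omega)}$, and the $L^2$ estimate for $\Phi$ gives $\|f\|_{H^{1/2}(\partial\Omega)}\le C_2(1+|\zeta|)e^{2|\zeta|}\|h\|_{H^{1/2}(\partial\Omega)}$ with $C_2=C_2(\Pi,\rho)$. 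Invertibility is completed by Fredholm theory: $\mathcal{S}_\zeta(\Lambda_\gamma-\Lambda_1)$ is compact on $H^{1/2}(\partial\Omega)$ (thanks to the smoothing of $\Lambda_\gamma-\Lambda_1$ arising from $\gamma\equiv 1$ near $\partial\Omega$), so $B_\zeta=I+\mathrm{compact}$ has Fredholm index zero, and the construction above exhibits surjectivity. The main obstacle I anticipate is organizing the chain of estimates so that only one factor of $(1+|\zeta|)$ appears rather than two; this forces one to bound $f-h$ through the $H^1(\Omega)$ norm of $g_\zeta*(q\Phi)$ via \eqref{convgzeta} with $s=1$, rather than estimating $v$ in $H^2(\Omega)$, which would prematurely commit an extra factor of $|\zeta|$ to differentiating $e^{-ix\cdot\zeta}$ twice.
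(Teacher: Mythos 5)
Your argument is correct and is essentially the paper's proof run in the forward direction: the paper takes a solution $f$ of $B_\zeta f = h$, writes $v_f = v - G_\zeta * (q v_f)$ with $v$ harmonic with trace $h$, conjugates by $e^{ix\cdot\zeta}$ and applies the same estimates \eqref{convgzeta}, so the decomposition, the origin of each factor $e^{|\zeta|}$ and of the single factor $(1+|\zeta|)$, and the resulting constant are identical to yours. The only substantive difference is that you establish invertibility yourself (surjectivity from the Lippmann--Schwinger construction plus Fredholm index zero for $I+\text{compact}$), whereas the paper only derives the quantitative bound and relies on the unique solvability of the boundary integral equation for $|\zeta|>D_q$ cited earlier from Nachman.
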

\begin{proof}
{	We follow \cite{Knudsen2009a}. Using integration by parts note that $B_\zeta f=f+G_\zeta \ast(qv_f)$ on $\partial \Omega$, where $v_f\in H^1(\Omega)$ is the unique solution to
\begin{equation}
\begin{aligned}
	(-\Delta+q)v_f &=0 \quad && \text { in } \Omega, \\
	v_f &=f \quad && \text { on } \partial \Omega.
\end{aligned}
\end{equation}
To bound $f$ we bound $v_f$ by writing $v_f = v-u^{\mathrm{exp}}$ with
\begin{equation}
\begin{aligned}
	\Delta v &=0 \quad && \text { in } \Omega, \\
	v &= B_\zeta f \quad && \text { on } \partial \Omega,
\end{aligned}
\end{equation}
and $u^{\mathrm{exp}}:=G_\zeta *(qv_f)$. From the stability property of the Dirichlet problem it is sufficient to bound $u^{\mathrm{exp}}$ in terms of $v$. Note $(-\Delta+q)u^{\mathrm{exp}}=qv$ and hence conjugating with exponentials yields the equation in $\mathbb{R}^3$,
\begin{equation}\label{eq:condconj}
	(-\Delta-2i\zeta \cdot \nabla+q)u = qve^{-ix\cdot \zeta},
\end{equation}
where we set $u = e^{-ix\cdot \zeta}u^{\mathrm{exp}}$. It is well known that $u$ is the unique solution among functions in certain weighted $L^2(\mathbb{R}^3)$-spaces satisfying
$$\|u\|_{L^2(\Omega)} \leq C\|q\|_{L^\infty}\frac{e^{|\zeta|}}{|\zeta|}\|v\|_{L^2(\Omega)},$$
whenever $|\zeta|>D_q$, see \cite{sylvester1987a}. Indeed, convolution with $g_\zeta$ on both sides of \eqref{eq:condconj} gives
$$u = g_\zeta*(-qu+qve^{-ix\cdot \zeta}),$$
which upgrades the estimate to
$$\|u\|_{H^1(\Omega)} \leq C\|q\|_{L^\infty}e^{|\zeta|}\|v\|_{L^2(\Omega)},$$
using \eqref{convgzeta}. Now the estimate \eqref{Bzetainv} follows straightforwardly from the trace theorem.
}
\end{proof}
We note that a main difference between the boundary integral equation in two dimensions and three dimensions is the possible existence of a certain $\zeta$ for which there exists no unique CGO solutions to \eqref{cgo1}. The next result shows that Lemma \ref{lemma1} and Lemma \ref{lemma2} implies solvability of the perturbed boundary integral equation using a Neumann series argument on the form
\begin{equation}
B_{\zeta}^\varepsilon = I+\mathcal{S}_\zeta(\Lambda_\gamma^\varepsilon-\Lambda_\gamma) +\mathcal{S}_\zeta(\Lambda_\gamma-\Lambda_1)=[I+A_\zeta^\varepsilon] B_\zeta,
\end{equation}
where $A^\varepsilon_\zeta:=\mathcal{S}_\zeta \mathcal{E} B_\zeta^{-1}$ is a bounded operator in $H^{1/2}(\partial \Omega)$. {It is clear from Lemma \ref{lemma2} that $q$ fixes a lower bound for $|\zeta|$, for which $B_\zeta$ is certain to be invertible.} When the noise level is sufficiently small such that $D_q<|\zeta|<R(\varepsilon)$, for some $R$, we may invert $B_\zeta^\varepsilon$. We have the following result.
\begin{lemma}\label{lemma3}
	Let $R = R(\varepsilon):=-\frac{1}{6}\log{\varepsilon}$, and suppose $D_q<|\zeta |< R(\varepsilon_1)$ for some $0<\varepsilon_1<1$. Then there exists $0<\varepsilon_2\leq\varepsilon_1$ for which $B_\zeta^\varepsilon$ is invertible whenever $0<\varepsilon < \varepsilon_2$. Furthermore we have the estimate
	\begin{equation}
		\|\psi^\varepsilon_\zeta - \psi_\zeta\|_{H^{1/2}(\partial \Omega)} \leq C_3\varepsilon(1+R)^4e^{7R},
	\end{equation}
	where $C_3$ is a constant depending only on the \textit{a priori} knowledge of $\Pi$ and $\rho$.
\end{lemma}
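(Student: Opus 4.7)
The plan is to apply a Neumann series argument to the factorization $B_\zeta^\varepsilon = [I+A_\zeta^\varepsilon]B_\zeta$ already prepared before the statement, with $A_\zeta^\varepsilon = \mathcal{S}_\zeta\mathcal{E}B_\zeta^{-1}$. Since $\mathcal{E} = \Lambda_\gamma^\varepsilon - \Lambda_\gamma \in Y$ preserves the mean-zero property required by Lemma \ref{lemma1}, combining Lemma \ref{lemma1}, Lemma \ref{lemma2}, and the noise bound $\|\mathcal{E}\|_{1/2,-1/2}\leq\varepsilon$ would yield
\begin{equation*}
\|A_\zeta^\varepsilon\|_{1/2}\leq \|\mathcal{S}_\zeta\|_{-1/2,1/2}\,\|\mathcal{E}\|_{1/2,-1/2}\,\|B_\zeta^{-1}\|_{1/2}\leq C_1 C_2\,\varepsilon(1+|\zeta|)^2 e^{4|\zeta|}.
\end{equation*}
For $|\zeta|<R(\varepsilon_1)$, which is a bound independent of $\varepsilon$, this quantity tends to zero as $\varepsilon\to 0$, so I would choose $\varepsilon_2\in(0,\varepsilon_1]$ small enough that $\|A_\zeta^\varepsilon\|_{1/2}\leq 1/2$ uniformly in the admissible range of $\zeta$. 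The Neumann series then gives invertibility of $I+A_\zeta^\varepsilon$ with $\|(I+A_\zeta^\varepsilon)^{-1}\|_{1/2}\leq 2$, whence $(B_\zeta^\varepsilon)^{-1}=B_\zeta^{-1}(I+A_\zeta^\varepsilon)^{-1}$ exists.

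For the error estimate I would use $\psi_\zeta|_{\partial\Omega}=B_\zeta^{-1}(e_\zeta|_{\partial\Omega})$ and $\psi_\zeta^\varepsilon|_{\partial\Omega}=(B_\zeta^\varepsilon)^{-1}(e_\zeta|_{\partial\Omega})$ from \eqref{bie} and \eqref{bienoisy}, producing the clean identity
\begin{equation*}
\psi_\zeta^\varepsilon-\psi_\zeta = B_\zeta^{-1}\bigl[(I+A_\zeta^\varepsilon)^{-1}-I\bigr]e_\zeta = -B_\zeta^{-1}(I+A_\zeta^\varepsilon)^{-1}A_\zeta^\varepsilon\,e_\zeta
\end{equation*}
on $\partial\Omega$. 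Taking $H^{1/2}(\partial\Omega)$-norms and combining $\|B_\zeta^{-1}\|_{1/2}\leq C_2(1+|\zeta|)e^{2|\zeta|}$, $\|(I+A_\zeta^\varepsilon)^{-1}\|_{1/2}\leq 2$, $\|A_\zeta^\varepsilon\|_{1/2}\leq C\varepsilon(1+|\zeta|)^2 e^{4|\zeta|}$, and the elementary trace bound $\|e_\zeta\|_{H^{1/2}(\partial\Omega)}\leq C(1+|\zeta|)e^{|\zeta|}$ (which follows from $\Omega\subset B_1(0)$ and the bound $\|e_\zeta\|_{H^1(\Omega)}\leq C(1+|\zeta|)e^{|\zeta|}$) would give
\begin{equation*}
\|\psi_\zeta^\varepsilon-\psi_\zeta\|_{H^{1/2}(\partial\Omega)} \leq C\varepsilon(1+|\zeta|)^4 e^{7|\zeta|}\leq C_3\,\varepsilon(1+R)^4 e^{7R},
\end{equation*}
using $|\zeta|<R(\varepsilon_1)<R=R(\varepsilon)$ in the last step. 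The exponent $7=2+2+2+1$ tracks the four exponential contributions from $\|B_\zeta^{-1}\|$, the two mapping-norm factors inside $A_\zeta^\varepsilon$, and $\|e_\zeta\|_{H^{1/2}(\partial\Omega)}$, and $C_3$ inherits its dependence only on $\Pi,\rho$ from $C_2$.

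The task is really careful bookkeeping rather than a conceptual obstacle: the rate $R(\varepsilon)=-\tfrac{1}{6}\log\varepsilon$ is tuned precisely so that the factor $e^{4|\zeta|}$ arising in $\|A_\zeta^\varepsilon\|$ is at most $\varepsilon^{-2/3}$ at the extreme $|\zeta|=R$, leaving $\varepsilon^{1/3}\to 0$ and thus uniform Neumann invertibility with a generous margin. The unavoidable cost is the residual growth $e^{7R}$ in the final estimate, which propagates into the scattering transform and motivates the coupled real-frequency truncation $M(\varepsilon)$ introduced in Method \ref{method:2}.
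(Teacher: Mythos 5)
Your proposal is correct and follows essentially the same route as the paper: the factorization $B_\zeta^\varepsilon=[I+A_\zeta^\varepsilon]B_\zeta$, the bound $\|A_\zeta^\varepsilon\|_{1/2}\leq C_1C_2\varepsilon(1+|\zeta|)^2e^{4|\zeta|}$ from Lemmas \ref{lemma1} and \ref{lemma2} together with the mean-zero property of $\mathcal{E}\in Y$, a Neumann series to get $\|(I+A_\zeta^\varepsilon)^{-1}\|_{1/2}\leq 2$, and the identity $(B_\zeta^\varepsilon)^{-1}-B_\zeta^{-1}=-B_\zeta^{-1}(I+A_\zeta^\varepsilon)^{-1}A_\zeta^\varepsilon$ applied to $e_\zeta|_{\partial\Omega}$, with the same exponent bookkeeping yielding $(1+R)^4e^{7R}$. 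No gaps.
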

\begin{proof}
Since $\mathcal{E}\in Y$, it maps onto trace functions that have zero mean on the boundary. Then from Lemma \ref{lemma1} and Lemma \ref{lemma2} we find
\begin{align}\label{epsilon0}
	\|A_\zeta^\varepsilon\|_{1/2}= \|\mathcal{S}_\zeta \mathcal{E} B_\zeta^{-1}\|_{1/2}&\leq C_1C_2 \varepsilon (1+R)^2 e^{4R},\\
	&\leq C \varepsilon e^{5R}, \label{eq:rhs}
\end{align}
where we have absorbed the polynomial in $R$ into the exponential and thereby obtained a new constant. By the definition of $R$, we note the right-hand side of \eqref{eq:rhs} goes to zero as $\varepsilon$ goes to zero, and hence there exists a $0<\varepsilon_2 \leq \varepsilon_1$ such that $\|A_\zeta^\varepsilon\|_{1/2}<\frac{1}{2}$. Then by a Neumann series argument, $I+A_\zeta^\varepsilon$ is invertible with $\|(I+A_\zeta^\varepsilon)^{-1} \|_{1/2}<2$, and $(B_\zeta^\varepsilon)^{-1}=B_\zeta^{-1}[I+A_\zeta^\varepsilon]^{-1}$. From the boundary integral equations we have $\psi_\zeta=B^{-1}_\zeta (e_\zeta|_{\partial \Omega})$ and $\psi^\varepsilon_\zeta=(B_\zeta^\varepsilon)^{-1}(e_\zeta|_{\partial \Omega})$. Then with the use of Lemma \ref{lemma2} we have for $0<\varepsilon<\varepsilon_2$
	\begin{align}
		\|\psi^\varepsilon_\zeta\|_{H^{1/2}(\partial \Omega)} &\leq \|(B_\zeta^\varepsilon)^{-1}(e_\zeta|_{\partial \Omega})\|_{H^{1/2}(\partial \Omega)},\\		
		&\leq 2\|B_\zeta^{-1}\|_{1/2} \|e^{ix\cdot \zeta}\|_{H^{1/2}(\partial \Omega)},\label{Bzetaeps}\\
		&\leq C (1+|\zeta|)^2e^{3|\zeta|}.\label{psiestimate}
	\end{align}
With the use of Lemma \ref{lemma2} we have for $0<\varepsilon<\varepsilon_2$
\begin{align}
	\|(B_\zeta^\varepsilon)^{-1}-B^{-1}_\zeta\|_{1/2}&=\|B_\zeta^{-1}[(I+A_\zeta^\varepsilon)^{-1}-I]\|_{1/2},\\
	&\leq \|B_\zeta^{-1}\|_{1/2}\|(I+A_\zeta^\varepsilon)^{-1}[I-(I+A_\zeta^\varepsilon)]\|_{1/2},\\
	&\leq \|B_\zeta^{-1}\|_{1/2}\|(I+A_\zeta^\varepsilon)^{-1}\|_{1/2}\|A_\zeta^\varepsilon\|_{1/2},\\
	&\leq 2 C_1C_{2}^2 \varepsilon(1+R)^3e^{6R}.
\end{align}
Finally we obtain
	\begin{align}
		\|\psi^\varepsilon_\zeta - \psi_\zeta\|_{H^{1/2}(\partial \Omega)} &=\|[(B_\zeta^\varepsilon)^{-1}-B^{-1}_\zeta]e_\zeta\|_{H^{1/2}(\partial \Omega)},\\
		&\leq \|(B_\zeta^\varepsilon)^{-1}-B^{-1}_\zeta\|_{1/2} \|e^{ix\cdot \zeta}\|_{H^{1/2}(\partial \Omega)},\\
		&\leq 2 C_1C_{2}^2 \varepsilon(1+R)^4e^{7R},\label{psidifestimate}
	\end{align}
for $0<\varepsilon<\varepsilon_2$.
\end{proof}

\subsection{Truncation of the scattering transform}\label{sec:trunc}
We now show that fixing the magnitude of the complex frequency $|\zeta(\xi)|=(M(\varepsilon))^p$ with $p>3/2$, enables control over the proximity of the truncated scattering transform $\mathbf{t}^\varepsilon_M(\cdot,\zeta)$ to $\hat q$ for small noise levels. This choice is justified from the following result.

\begin{lemma}\label{lemma4}
	Let $M(\varepsilon) = (-1/11 \log(\varepsilon))^{1/p}$ be a truncation radius depending on $\varepsilon$ and some exponent $p>3/2$. Fix $\xi \in \mathbb{R}^3$ with $|\xi|< M(\varepsilon)$, suppose $\zeta(\xi)\in \mathcal{V}_\xi$ with
\begin{equation}
	|\zeta(\xi)|=(M(\varepsilon))^p=-\frac{1}{11}\log(\varepsilon)
	\end{equation}
and let $\varepsilon_2$ be defined as in the proof of Lemma \ref{lemma3}. Further fix $q\in L^\infty(\Omega)$ corresponding to a $\gamma \in D(F)$. Then $\mathbf{t}^\varepsilon_M$ is well defined by \eqref{errtscat} for $0<\varepsilon<\varepsilon_2$ and
\begin{equation}
	\lim_{\varepsilon \rightarrow 0} \|\mathbf{t}^\varepsilon_{M(\varepsilon)}-\hat q\|_{L^2(\mathbb{R}^3)}=0.
\end{equation}
\end{lemma}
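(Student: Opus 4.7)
The plan is to decompose $\|\mathbf{t}^\varepsilon_{M} - \hat q\|_{L^2(\mathbb{R}^3)}$ into a high-frequency tail on $\{|\xi|\geq M\}$ and a low-frequency body on $\{|\xi|<M\}$, and then split the body further into an approximation error $\mathbf{t}-\hat q$ and a perturbation error $\mathbf{t}^\varepsilon-\mathbf{t}$. The coupling $|\zeta(\xi)|=M^p$ with $p>3/2$ is designed precisely to balance these contributions.

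First I would establish well-definedness. The truncated scattering transform needs $(B_\zeta^\varepsilon)^{-1}$ to exist along the parametrization $|\zeta(\xi)|=-\tfrac{1}{11}\log\varepsilon$. Running the Neumann-series argument from the proof of Lemma \ref{lemma3} with this choice in place of $R$, the bound $\|A_\zeta^\varepsilon\|_{1/2}\leq C\varepsilon e^{5|\zeta|}$ becomes $C\varepsilon^{6/11}$, which is below $1/2$ for all sufficiently small $\varepsilon$, and hence invertibility is retained (and the analogues of the $\psi_\zeta^\varepsilon$-estimates \eqref{psiestimate}, \eqref{psidifestimate} hold with $R$ replaced by $|\zeta|$). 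For the tail, $q\in L^\infty(\Omega)$ with compact support implies $\hat q\in L^2(\mathbb{R}^3)$ by Plancherel, so $\|\hat q\,\mathbf{1}_{|\xi|\geq M(\varepsilon)}\|_{L^2}\to 0$ as $M(\varepsilon)\to\infty$.

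For the approximation error on the body, \eqref{lemmaont} gives the pointwise estimate $|\mathbf{t}(\xi,\zeta)-\hat q(\xi)|\leq C\|q\|_{L^\infty}^2|\zeta|^{-1}$, and since $|\zeta|=M^p$,
\begin{equation*}
\|\mathbf{t}-\hat q\|_{L^2(|\xi|<M)}\leq C\,M^{3/2}\,M^{-p}=CM^{3/2-p}\longrightarrow 0,
\end{equation*}
exactly because $p>3/2$. For the perturbation error I would write
\begin{equation*}
(\Lambda_\gamma^\varepsilon-\Lambda_1)\psi_\zeta^\varepsilon-(\Lambda_\gamma-\Lambda_1)\psi_\zeta=\mathcal{E}\psi_\zeta^\varepsilon+(\Lambda_\gamma-\Lambda_1)(\psi_\zeta^\varepsilon-\psi_\zeta),
\end{equation*}
and bound $\mathbf{t}^\varepsilon-\mathbf{t}$ by $H^{1/2}/H^{-1/2}$ duality, using $\|e^{-ix\cdot(\xi+\zeta)}\|_{H^{1/2}(\partial\Omega)}\leq C(1+|\xi+\zeta|)e^{|\zeta|}$ from the trace theorem and the $a\,priori$ $\Omega\subset B_1$. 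The CGO factors are then controlled by \eqref{psiestimate} and \eqref{psidifestimate}; the dominant contribution comes from the second term and produces
\begin{equation*}
|\mathbf{t}^\varepsilon(\xi,\zeta)-\mathbf{t}(\xi,\zeta)|\leq C\,\varepsilon(1+|\zeta|)^5 e^{8|\zeta|}.
\end{equation*}
Substituting $|\zeta|=-\tfrac{1}{11}\log\varepsilon$ turns this into $C\,\varepsilon^{3/11}|\log\varepsilon|^5$, and integrating over $|\xi|<M$ multiplies by $M^{3/2}$, which is only polylogarithmic in $\varepsilon$. Adding the three pieces yields the claim.

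The main obstacle is the balancing of $|\zeta|$ against $M$: the exponential factor $e^{8|\zeta|}$ from the perturbed CGO solutions must be defeated by the $\varepsilon$ coming from $\|\mathcal{E}\|_Y$, which forces $|\zeta|$ to grow no faster than a small multiple of $|\log\varepsilon|$ (hence the coefficient $1/11$), while simultaneously $|\zeta|=M^p$ must grow faster than $M^{3/2}$ to kill the approximation error $M^{3/2-p}$ (hence $p>3/2$). The ensuing $\varepsilon^{3/11}$ is what ultimately beats every polynomial in $|\log\varepsilon|$, giving convergence in $L^2$.
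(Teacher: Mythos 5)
Your proposal is correct and follows essentially the same route as the paper's proof: the same tail/body split, the same triangle inequality into the approximation error controlled by \eqref{lemmaont} and the perturbation error controlled via the decomposition $\mathcal{E}\psi_\zeta^\varepsilon+(\Lambda_\gamma-\Lambda_1)(\psi_\zeta^\varepsilon-\psi_\zeta)$ with the estimates \eqref{psiestimate} and \eqref{psidifestimate}, and the same balancing of $e^{c|\zeta|}$ against $\varepsilon$ through $|\zeta|=-\tfrac{1}{11}\log\varepsilon$ and of $M^{3/2}$ against $|\zeta|^{-1}=M^{-p}$ through $p>3/2$. The only cosmetic differences are that you keep the polynomial factors in $|\zeta|$ explicit (landing at $\varepsilon^{3/11}|\log\varepsilon|^5$ where the paper absorbs them into the exponential to get $\varepsilon^{1/11}$) and that you re-run the Neumann-series invertibility check directly at $|\zeta|=-\tfrac{1}{11}\log\varepsilon$, which the paper delegates to Lemma \ref{lemma3}.
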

\begin{proof}
For \textit{(i)} fix first $|\xi|< M(\varepsilon)$ and note first by the triangle inequality that
\begin{equation}\label{trianglerelation}
	|\mathbf{t}^{\varepsilon}_{M(\varepsilon)}(\xi,\zeta(\xi))-\hat q(\xi)|\leq |\mathbf{t}^\varepsilon_{M(\varepsilon)}(\xi,\zeta(\xi))-\mathbf{t}(\xi,\zeta(\xi)) |+ |\mathbf{t}(\xi,\zeta(\xi))-\hat q(\xi)|.
\end{equation}
By Lemma \ref{lemma3} there exists a unique solution $\psi^\varepsilon_\zeta$ to the perturbed boundary integral equation and hence $\mathbf{t}^\varepsilon_M$ is well defined. Using \eqref{psiestimate} and \eqref{psidifestimate}, we find the following, in which we set $R=R(\varepsilon)$, $M=M(\varepsilon)$ and $\zeta = \zeta(\xi)$ for simplicity of exposition,
\begin{align}
	|\mathbf{t}^\varepsilon_M(\xi,\zeta)-\mathbf{t}(\xi,\zeta)| &= \left|\int_{\partial \Omega} e^{-ix\cdot(\xi+\zeta)}[(\Lambda^\varepsilon_\gamma-\Lambda_1)\psi_\zeta^\varepsilon(x)-(\Lambda_\gamma-\Lambda_1)\psi_\zeta(x)]d\sigma(x) \right|,\\
	&\leq \|e ^{-ix\cdot(\xi+\zeta)}\|_{H^{1/2}(\partial \Omega)}\|\Lambda_\gamma-\Lambda_1\|_Y\|\psi_\zeta^\varepsilon-\psi_\zeta\|_{H^{1/2}(\partial \Omega)}\\ \label{testimation}
	&\phantom{=}\,\,+ \|e ^{-ix\cdot(\xi+\zeta)}\|_{H^{1/2}(\partial \Omega)}\|\Lambda_\gamma^\varepsilon-\Lambda_\gamma\|_Y\|\psi_\zeta^\varepsilon\|_{H^{1/2}(\partial \Omega)},\\
	&\leq C(1+|\zeta|)e^{|\zeta|}\left[\varepsilon(1+|\zeta|)^4e^{7|\zeta|} + \varepsilon(1+|\zeta|)^2e^{3|\zeta|} \right ],
\end{align}
where we use the fact that $\|\Lambda_\gamma - \Lambda_1\|_Y \leq C$, where $C$ depends only on $\Pi$ by the continuity of the forward map $\gamma\mapsto \Lambda_\gamma$. Then,
\begin{equation}
		|\mathbf{t}^\varepsilon_M(\xi,\zeta)-\mathbf{t}(\xi,\zeta)|\leq C\varepsilon e^{9|\zeta|}.
\end{equation}
 Using \eqref{trianglerelation} and the property \eqref{lemmaont} we conclude for $|\xi|< M(\varepsilon)$ that
\begin{equation}\label{eq:iestimate}
	|\mathbf{t}^{\varepsilon}_M(\xi,\zeta)-\hat q(\xi)|\leq C\varepsilon e^{9|\zeta|}+ C|\zeta|^{-1}.
	\end{equation}
Then for \textit{(ii)}, using the triangle inequality and \eqref{eq:iestimate} we find
\begin{align}
	\|\mathbf{t}^\varepsilon_M-\hat q\|_{L^2(\mathbb{R}^3)}&\leq \|\mathbf{t}^\varepsilon_M-\hat q\|_{L^2(|\xi|< M)}+\|\hat q\|_{L^2(|\xi|\geq M)},\\
	&\leq C(\varepsilon e^{9|\zeta|}+M^{-p})\left(\int_{0}^M  r^2 \, dr\right)^{1/2}+\|\hat q\|_{L^2(|\xi|\geq M)},\\
	& \leq C(\varepsilon e^{10|\zeta|}+M^{3/2-p})+\|\hat q\|_{L^2(|\xi|\geq M)},\\
	&\leq C\varepsilon^{1/11}+C(-1/11\log(\varepsilon))^{3/2-p}+\|\hat q\|_{L^2(|\xi|\geq M)},
\end{align}
for $0<\varepsilon<\varepsilon_2$. Since $q\in L^\infty(\Omega)$ is compactly supported in $\Omega$, we have $q\in L^2(\mathbb{R}^3)$, and hence the energy of the tail of $\hat q$ converges to zero as $M(\varepsilon)$ goes to infinity. The result follows as $p>3/2$.
\end{proof}
One may obtain an explicit decay of $\hat q$ by assuming a certain regularity of $q$. Notice the proof above works fine with the choice $|\zeta| = K_1M^{p}+K_2$ for some $0<K_1<1$, $K_2>0$ and $p>3/2$. A user may choose among such $|\zeta|$ freely, with $p=3/2$ being the critical choice. We now prove that $\gamma^\varepsilon$ exists and is unique and that the propagated reconstruction error tends to zero if $\varepsilon\rightarrow 0$, given $\|q^\varepsilon-q\|_{L^2(\Omega)}$ is sufficiently small. This is possible in $H^2(\Omega)$ by a Neumann series argument and elliptic regularity. For the boundary value problem
\begin{equation}
\begin{aligned}
	(-\Delta+q^\varepsilon)u&=f \quad && \text { in } \Omega, \\
u&=0 \quad && \text { on } \partial \Omega,
\end{aligned}	
\end{equation}
with $f\in L^2(\Omega)$, we introduce the notation $L^\varepsilon: H^1_0(\Omega)\cap H^2(\Omega)\rightarrow L^2(\Omega)$, $L^\varepsilon: u\mapsto f$, defined for any $q^\varepsilon\in L^2(\Omega)$ and then note
\begin{equation}\label{eq:finalstep}
	\gamma^\varepsilon = [(L^\varepsilon)^{-1}(-q^\varepsilon)+1]^2,
\end{equation}
whenever $(L^\varepsilon)^{-1}$ exists.

\begin{lemma}\label{lemma5}
Let $q=\Delta \gamma^{1/2}\gamma^{-1/2}$ be a potential with $\gamma\in D(F)$. Then there exists a $0<\varepsilon_3<1$ such that for $0<\varepsilon<\min(\varepsilon_2,\varepsilon_3)=:\varepsilon_0$ the boundary value problem
\begin{equation}\label{bvp1}
\begin{aligned}
	(-\Delta+q^\varepsilon)(\gamma^\varepsilon)^{1/2}&=0 \quad && \text { in } \Omega, \\
	(\gamma^\varepsilon)^{1/2} &=1 \quad && \text { on } \partial \Omega,
\end{aligned}
\end{equation}
has a unique solution in $H^2(\Omega)$. Furthermore the following inequality holds
\begin{equation}\label{estimatelem5}
\|\gamma^{1 / 2}-(\gamma^{\varepsilon})^{1 / 2}\|_{H^{2}(\Omega)} \leq C_4\|q-q^{\varepsilon}\|_{L^{2}(\Omega)},
\end{equation}
where $C_4$ is dependent only on $\Pi$ and $\rho$.
\end{lemma}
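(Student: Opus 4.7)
The plan is to realize the boundary value problem as an operator equation $L^\varepsilon w = -q^\varepsilon$ where $w = (\gamma^\varepsilon)^{1/2}-1$, so the problem reduces to showing $L^\varepsilon = -\Delta + q^\varepsilon$ with zero Dirichlet data is boundedly invertible from $L^2(\Omega)$ to $H^2(\Omega)\cap H^1_0(\Omega)$. The natural approach is a perturbation argument off of the exact operator $L = -\Delta + q$. First I would check that $L:H^2(\Omega)\cap H^1_0(\Omega)\to L^2(\Omega)$ is itself invertible: if $v\in H^1_0(\Omega)$ satisfies $Lv=0$, then by the standard substitution $u = \gamma^{-1/2}v$ (which is in $H^1_0$ because $\gamma\equiv 1$ near $\partial\Omega$), the function $u$ solves $\nabla\cdot(\gamma\nabla u)=0$ with zero boundary data, forcing $u=0$, so zero is not a Dirichlet eigenvalue of $L$. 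Elliptic regularity (smooth $\partial\Omega$, $q\in L^\infty$) then gives that $L^{-1}$ is bounded with a norm depending only on $\Pi$ and $\rho$.

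Next I would invert $L^\varepsilon$ by writing $L^\varepsilon = L\bigl(I + L^{-1}\mathcal{M}_{q^\varepsilon-q}\bigr)$, where $\mathcal{M}_f$ denotes multiplication by $f$. The crucial input is the 3D Sobolev embedding $H^2(\Omega)\hookrightarrow L^\infty(\Omega)$, which gives
\begin{equation}
\|L^{-1}\mathcal{M}_{q^\varepsilon-q} u\|_{H^2(\Omega)} \leq C\|(q^\varepsilon-q)u\|_{L^2(\Omega)} \leq C\|q^\varepsilon-q\|_{L^2(\Omega)}\|u\|_{H^2(\Omega)}.
\end{equation}
Since Lemma \ref{lemma4} together with Plancherel forces $\|q^\varepsilon-q\|_{L^2(\Omega)}\to 0$ as $\varepsilon\to 0$, I can pick $\varepsilon_3>0$ so that this operator norm is below $1/2$ for $\varepsilon<\varepsilon_3$; a Neumann series then yields $(L^\varepsilon)^{-1}$ with norm at most $2\|L^{-1}\|$. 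Setting $w = -(L^\varepsilon)^{-1}q^\varepsilon$ and defining $(\gamma^\varepsilon)^{1/2} = 1+w$ produces the unique $H^2$ solution of \eqref{bvp1}, in agreement with \eqref{eq:finalstep}.

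For the stability estimate \eqref{estimatelem5}, I would exploit $\gamma^{1/2} = 1 - L^{-1}q$ (derived the same way from $L\gamma^{1/2}=0$) and the second resolvent identity $(L^\varepsilon)^{-1} - L^{-1} = (L^\varepsilon)^{-1}\mathcal{M}_{q-q^\varepsilon}L^{-1}$. This decomposes the error as
\begin{equation}
\gamma^{1/2} - (\gamma^\varepsilon)^{1/2} = L^{-1}(q^\varepsilon - q) + (L^\varepsilon)^{-1}\mathcal{M}_{q-q^\varepsilon}L^{-1}q^\varepsilon.
\end{equation}
The first term is bounded in $H^2$ by $C\|q-q^\varepsilon\|_{L^2}$ directly. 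For the second, $L^{-1}q^\varepsilon$ lies in $H^2\hookrightarrow L^\infty$ with norm controlled by $\|q\|_{L^2}+\|q-q^\varepsilon\|_{L^2}$, which after shrinking $\varepsilon_3$ if necessary is bounded by a constant depending only on $\Pi$; multiplying by $q-q^\varepsilon$ and applying $(L^\varepsilon)^{-1}$ again gives the desired $L^2$-to-$H^2$ bound. Combining produces \eqref{estimatelem5} with $C_4=C_4(\Pi,\rho)$.

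The main obstacle I anticipate is purely bookkeeping: one must verify that every constant appearing along the way (the bound on $\|L^{-1}\|_{L^2\to H^2}$, the Sobolev embedding constant, the operator norm of multiplication by $q-q^\varepsilon$ on $H^2$, and the upper bound on $\|q\|_{L^2}$) depends only on the a priori quantities $\Pi$ and $\rho$ and not on $\varepsilon$ or on the specific realization of $q^\varepsilon$. Because $\|q\|_{L^\infty}$ is controlled by $\|\gamma\|_{C^2}$ and the lower bound $\Pi^{-1}$, and because uniqueness for the conductivity equation underpins the invertibility of $L$, this tracking is routine but must be done carefully to conclude that $\varepsilon_3$ and $C_4$ depend only on $\Pi$ and $\rho$.
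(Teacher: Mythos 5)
Your proposal is correct and follows essentially the same route as the paper: invertibility of $-\Delta+q$ by elliptic regularity, the factorization $L^\varepsilon = (-\Delta+q)[I+(-\Delta+q)^{-1}\mathcal{M}_{q^\varepsilon-q}]$, the Sobolev embedding $H^2(\Omega)\hookrightarrow L^\infty(\Omega)$ to bound the perturbation, and Lemma \ref{lemma4} to make its norm less than $1/2$ for a Neumann series. The only cosmetic difference is the final stability estimate, where the paper applies $(L^\varepsilon)^{-1}$ once to the equation $L^\varepsilon(\gamma^{1/2}-(\gamma^\varepsilon)^{1/2})=(q^\varepsilon-q)\gamma^{1/2}$ rather than invoking the resolvent identity, but the two computations are equivalent.
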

\begin{proof}
Note $(-\Delta+q)^{-1}$ exists and is bounded for $L^2(\Omega)$ into $H^1_0(\Omega)\cap H^2(\Omega)$ with
 \begin{equation}\label{boundedsol}
\|u\|_{H^{2}(\Omega)} \leq C\|f\|_{L^{2}(\Omega)},
\end{equation}
by elliptic regularity \cite{evans2010a}. Here $C$ is dependent only on $\Pi$. We construct
	\begin{equation}\label{construction}
		L^\varepsilon u = (-\Delta+q)[I+(-\Delta+q)^{-1}(q^\varepsilon-q)]u,
	\end{equation}
	and seek boundedness of $(-\Delta+q)^{-1}(q^\varepsilon-q)$ in $H^2(\Omega)$ as our goal. For any $u\in H^2(\Omega)$
\begin{align}
\|(-\Delta+q)^{-1}(q^\varepsilon-q)u\|_{H^2(\Omega)} \leq C \|q^\varepsilon-q\|_{L^2(\Omega)} \|u\|_{H^2(\Omega)},
\end{align}
using \eqref{boundedsol} and Sobolev embedding theory. By Lemma \ref{lemma4}, there exists a $0<\varepsilon_3<1$ such that for all $0<\varepsilon<\min(\varepsilon_2,\varepsilon_3)$
\begin{equation}
	\|(-\Delta+q)^{-1}(q^\varepsilon-q)\|_{H^{2}(\Omega)\rightarrow H^{2}(\Omega)}\leq C\|q^\varepsilon-q \|_{L^2(\Omega)}<\frac{1}{2}.
\end{equation}
 Hence $(L^\varepsilon)^{-1}$ exists and is uniformly bounded with respect to $0<\varepsilon \leq \varepsilon_0$. Finally, since $\gamma \in L^\infty(\Omega)$ we have $(q^\varepsilon-q)\gamma^{1/2}\in L^2(\Omega)$, and by solving
\begin{alignat}{2}
	L^\varepsilon(\gamma^{1/2}-(\gamma^\varepsilon)^{1/2})&=(q^\varepsilon-q)\gamma^{1/2} \quad && \text { in } \Omega \\
\gamma^{1/2}-(\gamma^\varepsilon)^{1/2}&=0 \quad && \text { on } \partial \Omega,
\end{alignat}
we obtain the estimate \eqref{estimatelem5}.
\end{proof}
We conclude that $\gamma^\varepsilon$ of Method \ref{method:2} exists uniquely and approximates $\gamma$ in the $H^2(\Omega)$-norm, whenever $\varepsilon < \varepsilon_0$.
\section{Extending the method to a regularization strategy}\label{sec:35}
From the definition of an admissible regularization strategy it is clear $\mathcal{R}_\alpha$ must be defined on $Y$ and not only an $\varepsilon_0$-neighborhood of $F(\mathcal{D}(F))$. However, $(B_\zeta^\varepsilon)^{-1}$ and $(L^\varepsilon)^{-1}$ exists only for small enough $\varepsilon$. We confront this by extending these operators to $(B_\zeta^\varepsilon)_\alpha^{\dagger}$ and $(L^\varepsilon)_\alpha^{\dagger}$ coinciding with $(B_\zeta^\varepsilon)^{-1}$ and $(L^\varepsilon)^{-1}$ for $\varepsilon < \varepsilon_0$, such that $\mathcal{R}_\alpha$ is continuous and well defined on $Y$. There are several ways to obtain such extensions, however we will follow \cite{Knudsen2009a} and construct explicit pseudoinverses by means of functional calculus.
Define the normal operator
\begin{equation}
	S^\varepsilon_\zeta:=(B_\zeta^\varepsilon)^\ast(B_\zeta^\varepsilon)\in\mathcal{L}(H^{1/2}(\partial \Omega)),
\end{equation}
where $(B_\zeta^\varepsilon)^\ast$ is the adjoint operator of  $(B_\zeta^\varepsilon)\in \mathcal{L}(H^{1/2}(\partial \Omega))$. Similarly we define
\begin{equation}
	T^\varepsilon_\zeta:=(L^\varepsilon)^\ast(L^\varepsilon)\in\mathcal{L}(L^2(\Omega)).
\end{equation}
Let $h_\alpha^1$ and $h_\alpha^2$ be two real functions defined for $0<\alpha<\infty$ as
\begin{equation}
	h^i_\alpha(t) := \begin{cases}
		t^{-1} & \text{ for } t>\kappa_i(\alpha),\\
		\kappa_i(\alpha)^{-1} & \text{ for } t\leq \kappa_i(\alpha),
	\end{cases}
\end{equation}
for $i=1,2$ with $\kappa_i(\alpha)=\frac{1}{4}r_i(\alpha)^2$, where we will see below the estimates \eqref{estimatelem5} and \eqref{boundB} motivates the definition
\begin{equation}
	r_i(\alpha) := \begin{cases}
		\frac{1}{C_2(1+\alpha^{-p})e^{2\alpha^{-p}}} & \text{ for } i=1,\\
		\frac{1}{C_4} & \text{ for } i=2,
	\end{cases}
\end{equation}
with $p>3/2$. We define the $\alpha$-pseudoinverses ${(B_\zeta^\varepsilon)}_\alpha^{\dagger}$ of $B_\zeta^\varepsilon$ and ${(L^\varepsilon)}_\alpha^{\dagger}$ of $L^\varepsilon$ for any $0<\alpha<\infty$ as
\begin{align}
	(B_\zeta^\varepsilon)_\alpha^{\dagger} &:= h^1_\alpha(S_\zeta^\varepsilon)(B_\zeta^\varepsilon)^\ast,\\
	(L^\varepsilon)_\alpha^{\dagger} &:= h^2_\alpha(T^\varepsilon)(L^\varepsilon)^\ast,
\end{align}
where the operators $h^1_\alpha(S_\zeta^\varepsilon)$ in $\mathcal{L}(H^{1/2}(\partial \Omega))$ and $h^2_\alpha(T^\varepsilon)$ in $\mathcal{L}(L^2(\Omega))$ are defined in the sense of continuous functional calculus (see for example \cite{reed1980a,so2018a}) and depend continuously on $S_\zeta^\varepsilon$ and $T^\varepsilon$, respectively (see for example \cite[Lemma 3.1]{Knudsen2009a}). This implies $\Lambda_\gamma^\varepsilon \mapsto (B_\zeta^\varepsilon)_\alpha^{\dagger}$ and $q^\varepsilon \mapsto (L^\varepsilon)_\alpha^{\dagger}$ are continuous mappings. Explicitly, for a self-adjoint operator $S:\mathcal{H}\rightarrow \mathcal{H}$ for a Hilbert space $\mathcal{H}$ we set
\begin{equation}\label{spectraldecomp}
	h^i_\alpha(S) = \int_{\sigma(S)} h^i_\alpha(\lambda)\, dP(\lambda),
\end{equation}
where $\sigma(S)\subset \mathbb{C}$ denotes the spectrum of $S$, and $P$ is a spectral measure on $\sigma(S)$.

\begin{method}\label{Method2} Regularized CGO reconstruction in three dimensions
\begin{description}
		\item[\textbf{Step} $\mathbf{1_\alpha}$] Given $\alpha>0$, set $M=\alpha^{-1}$. For each $|\xi|<M$ take $\zeta(\xi)\in \mathcal{V}_\xi$ with $|\zeta(\xi)|=M^{p}$ for $p>3/2$ and define
		\begin{equation}
			\tilde{\psi}_\alpha:=(B_\zeta^\varepsilon)_\alpha^{\dagger}(e_\zeta|_{\partial \Omega})
		\end{equation}
and compute the truncated scattering transform $\mathbf{t}_\alpha(\xi,\zeta(\xi))$ for $\zeta(\xi)$ in $\mathcal{V}_\xi$  by
\begin{equation}\label{errtscat}
\tilde{\mathbf{t}}_\alpha(\xi, \zeta(\xi))= \begin{cases}
	\int_{\partial \Omega} e^{-i x \cdot(\xi+\zeta(\xi))}(\Lambda_{\gamma}^\varepsilon-\Lambda_{1})\tilde{\psi}_\alpha(x) d\sigma(x) & |\xi|< M,\\
	0 &|\xi|\geq M
\end{cases}
\end{equation}
		\item[\textbf{Step} $\mathbf{2_\alpha}$] Define $\widehat{q_\alpha}(\xi):=\tilde{\mathbf{t}}_\alpha(\xi,\zeta(\xi))$ and compute the inverse Fourier transform to obtain $q_\alpha$.
		\item[\textbf{Step} $\mathbf{3_\alpha}$] Solve the boundary value problem \eqref{bvp1} by computing $(L^\varepsilon)_\alpha^{\dagger}(-q_\alpha)$ and set
		\begin{equation}\label{def:regstrat}
			\mathcal{R}_\alpha \Lambda_\gamma^\varepsilon := [(L^\varepsilon)_\alpha^{\dagger}(-q_\alpha)+1]^2
		\end{equation}
\end{description}
\end{method}

\begin{proof}[Proof of Theorem \ref{maintheorem}]
Given $\Lambda_\gamma^\varepsilon$ in $Y$ we have
\begin{align}
	|\tilde{\mathbf{t}}_\alpha(\xi, \zeta(\xi))| &\leq \left|\int_{\partial \Omega} e^{-ix\cdot(\xi+\zeta)}[(\Lambda^\varepsilon_\gamma-\Lambda_\gamma)\tilde{\psi}_\alpha(x)+(\Lambda_\gamma-\Lambda_1)\tilde{\psi}_\alpha(x)]d\sigma(x) \right|,\\
	&\leq \|e ^{-ix\cdot(\xi+\zeta)}\|_{H^{1/2}(\partial \Omega)}\|\Lambda^\varepsilon_\gamma-\Lambda_\gamma\|_Y\|\tilde{\psi}_\alpha\|_{H^{1/2}(\partial \Omega)}\\
	&\phantom{=}\,\, \|e ^{-ix\cdot(\xi+\zeta)}\|_{H^{1/2}(\partial \Omega)}\|\Lambda_\gamma-\Lambda_1\|_Y\|\tilde{\psi}_\alpha\|_{H^{1/2}(\partial \Omega)},\\
	&< \infty,\label{teruendelig}
\end{align}
for all $\xi \in \mathbb{R}^3$, since $(B_\zeta^\varepsilon)_\alpha^{\dagger}$ is bounded in $H^{1/2}(\partial\Omega)$. Then by compact support $\tilde{\mathbf{t}}_\alpha\in L^2(\mathbb{R}^3)$. It follows the inverse Fourier transform of this object is well defined and hence the family of operators $\mathcal{R}_\alpha$ is well defined. Using the continuity of the maps $\Lambda_\gamma^\varepsilon \mapsto (B_\zeta^\varepsilon)_\alpha^{\dagger}$ and $q_\alpha \mapsto (L^\varepsilon)_\alpha^{\dagger}$, a parallel estimation to \eqref{testimation} and the linearity and boundedness of the inverse Fourier transform in $L^2(\mathbb{R}^3)$, it is clear that $\mathcal{R}_\alpha$ is a family of continuous mappings. Now recall from Lemma \ref{lemma2} and \eqref{Bzetaeps} that for $0<\varepsilon<\varepsilon_0$ we have that
\begin{equation}\label{boundB}
	\|(B_\zeta^\varepsilon)\|_{1/2}^{-1} \leq \|(B_\zeta^\varepsilon)^{-1}\|_{1/2}\leq 2 C_{2} (1+|\zeta|)e^{2|\zeta|}.
\end{equation}
Set $|\zeta|=\alpha^{-p}$ and note
\begin{equation}
	S_\zeta^\varepsilon \geq \frac{1}{4}r_1(\alpha)^2 I.
\end{equation}
By definition of the $\alpha$-pseudoinverse and \eqref{spectraldecomp} we have that $(B_\zeta^\varepsilon)_\alpha^{\dagger}=(B_\zeta^\varepsilon)^{-1}$ for $0<\varepsilon<\varepsilon_0$, and hence $\tilde{\psi}_\alpha=\psi_\zeta^\varepsilon$ is unique. It follows by Lemma \ref{lemma4} that $\tilde{\mathbf{t}}(\cdot,\zeta(\cdot))$ is well defined and $q_\alpha = q^\varepsilon$ converges to $q$ as $\varepsilon$ goes to zero. Conversely, for $0<\varepsilon<\varepsilon_0$ we have $(L^\varepsilon)_\alpha^{\dagger}=(L^\varepsilon)^{-1}$, and hence by Lemma \ref{lemma5} and the Sobolev embedding $H^2(\Omega)\subset C^0(\overline{\Omega})$, \eqref{reqstrong} is satisfied. Note also the weaker requirement \eqref{reqweak} follows analogously. The property \eqref{alphaprop} is satisfied by \eqref{alphadef}.
\end{proof}
A direct consequence of the truncation of the scattering transform is the following property of the reconstruction $\mathcal{R}_\alpha(\varepsilon) \Lambda_\gamma^\varepsilon$ for sufficiently small $\varepsilon$. The regularized reconstructions are as regular as $\Omega$.
\begin{proposition}
	Suppose $\Lambda_\gamma^\varepsilon=\Lambda_\gamma+\mathcal{E}$ with $\|\mathcal{E}\|_Y\leq \varepsilon < \varepsilon_0$. Then $\mathcal{R}_\alpha(\varepsilon) \Lambda_\gamma^\varepsilon\in C^\infty(\overline{\Omega})$.
\end{proposition}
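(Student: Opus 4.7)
The plan is to trace the smoothness of each object produced by Method \ref{Method2} in turn, starting from the truncation at the scattering-transform level and ending with the squaring in \eqref{def:regstrat}.

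First I would note that by the hypothesis $\varepsilon<\varepsilon_0$ and the argument used inside the proof of Theorem \ref{maintheorem}, the $\alpha$-pseudoinverses in Step $\mathbf{1}_\alpha$ and Step $\mathbf{3}_\alpha$ actually coincide with the true inverses $(B_\zeta^\varepsilon)^{-1}$ and $(L^\varepsilon)^{-1}$. So $q_\alpha$ equals the inverse Fourier transform of a function $\tilde{\mathbf{t}}_\alpha(\cdot,\zeta(\cdot))$ which, by construction in Step $\mathbf{1}_\alpha$, is supported in the ball $\{|\xi|<M\}$; the bound \eqref{teruendelig} shows that this function lies in $L^2(\mathbb{R}^3)$ (indeed in $L^\infty$ with compact support).

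Next I would invoke the Paley--Wiener theorem: the inverse Fourier transform of a compactly supported $L^2$ function on $\mathbb{R}^3$ is the restriction to $\mathbb{R}^3$ of an entire function of exponential type; in particular $q_\alpha\in C^\infty(\mathbb{R}^3)$ and hence $q_\alpha\in C^\infty(\overline{\Omega})$. Then the function $w_\alpha:=(L^\varepsilon)^{-1}(-q_\alpha)+1$ satisfies the boundary value problem
\begin{equation}
(-\Delta+q_\alpha)w_\alpha=0\quad\text{in }\Omega,\qquad w_\alpha=1\quad\text{on }\partial\Omega,
\end{equation}
with smooth coefficient $q_\alpha$, smooth boundary datum, and smooth boundary $\partial\Omega$ (by Assumption 1). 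Standard elliptic regularity up to the boundary (for instance \cite{evans2010a}) then gives $w_\alpha\in C^\infty(\overline{\Omega})$.

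Finally, since $C^\infty(\overline{\Omega})$ is an algebra under pointwise multiplication, squaring yields $\mathcal{R}_{\alpha(\varepsilon)}\Lambda_\gamma^\varepsilon=w_\alpha^2\in C^\infty(\overline{\Omega})$. The only nontrivial observation is really the first one, that the truncation in Step $\mathbf{1}_\alpha$ turns $q_\alpha$ into a band-limited, hence entire-analytic, function; once that is in hand, elliptic regularity does the rest and there is no substantive obstacle.
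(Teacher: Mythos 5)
Your proposal is correct and follows essentially the same route as the paper: the compact support of the truncated scattering transform forces $q_\alpha$ to be smooth (the paper phrases this via $\tilde{\mathbf{t}}_\alpha\in L^1$ with compact support rather than Paley--Wiener, but it is the same observation), and elliptic regularity on the smooth domain then yields $\mathcal{R}_\alpha\Lambda_\gamma^\varepsilon\in C^\infty(\overline{\Omega})$. The extra details you supply (the coincidence of the pseudoinverses with the true inverses for $\varepsilon<\varepsilon_0$ and the algebra property under squaring) are consistent with, and merely flesh out, the paper's two-line argument.
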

\begin{proof}
	Since $\tilde{\mathbf{t}}_\alpha(\cdot, \zeta(\cdot))\in L^1(\mathbb{R}^3)$ has compact support, it follows $q_\alpha$ is smooth. Since $\partial \Omega$ is smooth, it follows $\mathcal{R}_\alpha \Lambda_\gamma^\varepsilon\in C^\infty(\overline{\Omega})$ by elliptic regularity \cite{evans2010a}.
\end{proof}

\section{Computational methods}\label{sec:4}
In this section we outline methods of representing and computing the Dirichlet-to-Neumann map numerically and consider the discretization of the boundary integral equations. We assume $\Omega = B(0,1)$ in order to utilize spherical harmonics in representation of functions on $\partial \Omega$.
\subsection{Representation and computation of the Dirichlet-to-Neumann map}\label{sec41}
 We consider the Hilbert space $H^s(\partial \Omega)$, $s>0$, defined as the space of all functions $f$ in $L^2(\partial \Omega)$ that satisfy
\begin{equation}\label{h1norm1}
	\|f\|_{L^{2}(\partial \Omega)}^{2}+\|(-\Delta_S)^{s / 2} f\|_{L^{2}(\partial \Omega)}^{2}<\infty,
\end{equation}
where $(-\Delta_S)^{s / 2}$ is the fractional order spherical Laplace operator on the unit sphere. Since spherical harmonics, say $\{Y_n^m\}_{n\in\mathbb{N}_0, |m|\leq n}$, constitute an orthonormal basis of $L^2(\partial \Omega)$ (see for example \cite{colton1992a}), we may expand $f \in L^2(\partial \Omega)$ as
\begin{equation}
	f = \sum_{n=0}^\infty\sum_{m=-n}^n \langle f, Y^m_n \rangle Y^m_n, \qquad \langle f, Y^m_n \rangle = \int_{\partial \Omega} f(x) \overline{Y^m_n(x)}\, d\sigma(x).
\end{equation}
The spherical harmonics are eigenvectors of $(-\Delta_S)$, in particular,
\begin{equation}
\left(-\Delta_{S}\right)^{s / 2} Y=(n(n+1))^{s / 2} Y,
\end{equation}
for any spherical harmonic $Y$ of degree $n$. Then the requirement \eqref{h1norm1} gives rise to a characterization of $H^s(\partial \Omega)$ suitable for $s\in \mathbb{R}$ as those functions $f \in L^2(\partial \Omega)$ that satisfy
\begin{equation}
	\sum_{n=0}^\infty\sum_{m=-n}^n (1+n^2)^s|\langle f, Y^m_n \rangle |^2 < \infty.
\end{equation}
See \cite[Chapter 1.7]{MR0350177} for a more general treatment and the case $s<0$. Thus we define the $H^s(\partial \Omega)$ inner products as
\begin{equation}
	\langle f,g \rangle_s:=\langle f,g \rangle_{H^s(\partial \Omega)}=\sum_{n=0}^\infty\sum_{m=-n}^n w_s(n)\langle f,Y^m_n\rangle \overline{w_s(n) \langle g,Y^m_n\rangle},
\end{equation}
where the multiplier functions are defined as
\begin{equation}
	w_s(n) :=(1+n^2)^{s/2}, \qquad \text{for $n\in \mathbb{N}_0$, $s\in \mathbb{R}$},
\end{equation}
and hence $\|f\|_{H^s(\partial \Omega)} = \langle f, f \rangle_s^{1/2}$. We build an orthonormal basis $\{\phi_{n,m}^s\}_{n\in \mathbb{N}_0, |m|\leq n}$ of $H^s(\partial \Omega)$ with
\begin{equation}
	\phi_{n,m}^s = w_{-s}(n)Y^m_n.
\end{equation}
and hence any $g\in H^s(\partial\Omega)$ has the expansion
\begin{equation}
	g = \sum_{n=0}^\infty\sum_{m=-n}^n \langle g, \phi_{n,m}^s \rangle_s \phi_{n,m}^s.
\end{equation}
Consider the $L^2(\partial \Omega)$ orthogonal projection $P_N$ to the space spanned by spherical harmonics of degree less than or equal to $N$, as
\begin{equation}
	P_Ng = \sum_{n=0}^N\sum_{m=-n}^n \langle g, Y^m_n\rangle Y^m_n.
\end{equation}
Note $\langle g, Y^m_n\rangle$ as an integral over the unit sphere may be approximated by coefficients $c_{n,m}(\underline{g})$ using Gauss-Legendre quadrature in $2(N+1)^2$ appropriately chosen quadrature points $\{x_k\}_{k=1}^{2(N+1)^2}$ on the unit sphere as in \cite{delbary2014a}. Here we denote $\underline{g} = (g(x_1),\hdots, g(x_{2(N+1)^2}))$. Define
\begin{equation}
	L_N g := \sum_{n=0}^N\sum_{m=-n}^n c_{n,m}(\underline{g}) Y^m_n.
\end{equation}
We may approximate any operator $\Lambda:H^{s}(\partial \Omega)\rightarrow H^{-s}(\partial \Omega)$ using $Q$, a matrix in $\mathbb{C}^{2(N+1)^2\times 2(N+1)^2}$ defined by
\begin{equation}\label{lambdaapprox}
(\Lambda g)(x_k) \simeq [{ Q}\underline{g}]_k :=  \sum_{n=0}^N\sum_{m=-n}^n  c_{n,m}(\underline{g}) (\Lambda Y_n^m)(x_k), \quad k=1,\hdots,2(N+1)^2.
\end{equation}
{From here it is clear we can write ${ Q}$ as
\begin{equation}
	{Q} = \widetilde{{Q}}{A}, \label{Atransform}
\end{equation}
where ${A}:\underline{g}\mapsto (c_{0,0}(\underline{g}),\hdots,c_{N,N}(\underline{g}))$, and $[\widetilde{{Q}}]_{k\ell} = \Lambda Y_\ell(x_k)$, where $Y_\ell$ is the $\ell'$th spherical harmonic in the natural order. We can think of ${A}$ as the matrix that takes a point-cloud representation of a function on $\partial \Omega$ and gives the spherical harmonic representation. }

Similarly to \cite{Knudsen2009a}, an approximation of the operator norm then takes the form
\begin{equation}\label{normapprox}
	\|\Lambda\|_{s,-s} \simeq \sup_f \frac{\|{{\mathcal{Q}}}f\|_{\mathbb{C}^{(N+1)^2}}}{\|f\|_{\mathbb{C}^{(N+1)^2}}} = \|{{{\mathcal{Q}}}}\|_{N},
\end{equation}
where $[{{{\mathcal{Q}}}}]_{ij} = \langle \Lambda \phi^s_{n,m}, \phi^{{-}s}_{n',m'} \rangle_{-s}$ with $i = n'^2+n'+m'+1$ and $j = n^2+n+m+1$. We may approximate
\begin{align}
	\langle \Lambda \phi^s_{n,m}, \phi^{{-}s}_{n',m'} \rangle_{-s} &= w_{-s}(n)w_{-s}(n')\langle \Lambda Y_n^m, Y_{n'}^{m'}\rangle, \\
	&\simeq w_{-s}(n)w_{-s}(n') {c_{n',m'}(\underline{\Lambda Y_{n}^m})}.  \label{innerapprox}
\end{align}
{With $\mathcal{B}$ we denote the map that takes the matrix ${Q}$ and gives the approximation of ${{\mathcal{Q}}}$ defined by \eqref{innerapprox}}. For $\Lambda = \Lambda_\gamma$ we denote the approximation \eqref{lambdaapprox}, ${Q}_\gamma$.

From \eqref{lambdaapprox} {it} is clear that to represent $\Lambda_\gamma$ we need only to compute $(\Lambda_\gamma Y_n^m)(x_k)$ in the quadrature points $x_k$. In this paper we compute $(\Lambda_\gamma Y_n^m)(x_k)$ efficiently by the boundary integral approach for piecewise constant conductivities given in \cite{delbary2014a}, an approach which despite the lack of reconstruction theory has shown to perform well.
\subsection{Noise model}
We simulate a perturbation of the Dirichlet-to-Neumann map by adding Gaussian noise to ${Q}_\gamma$. We let
\begin{equation}
	{Q}_\gamma^\varepsilon = {Q}_\gamma+\delta {E}, \label{noisemodel}
\end{equation}
where $\delta >0$ and the elements of the $2(N+1)^2\times 2(N+1)^2$ matrix ${E}$ are independent Gaussian random variables with zero mean and unit variance. We modify ${E}$ such that $\mathcal{B}{E}$ has a first row and column as zeros, such that we may consider $\mathcal{B}{E}$ as an approximation of a linear and bounded operator $\mathcal{E}\in Y$. Furthermore, we approximate $\|\mathcal{E}\|_Y$ using \eqref{normapprox} and \eqref{innerapprox} and note we can specify an absolute level of noise $\|\mathcal{E}\|_Y \approx \varepsilon$ by choosing $\delta$ appropriately. The relative noise level is then
\begin{equation}
	\delta \frac{\|\mathcal{E}\|_Y}{\|\Lambda_\gamma\|_Y} \approx \delta \frac{\|{\mathcal{B}}{E}\|_{N}}{\|{\mathcal{B}}{Q}_\gamma\|_{N}}.
\end{equation}
{Note the noise model in \cite{delbary2014a} scales each element of ${E}$ with the corresponding element of ${Q}_\gamma$.  Noise models for electrode data simulation typically takes the form
$$V_j^\varepsilon = V_j+\delta_j E_j,$$
as in \cite{hamilton2020a}, where $V_j$ is the voltage vector corresponding to the $j$'th current pattern, $\delta_j>0$ is a scaling parameter dependent on $V_j$ and $E_j$ is a Gaussian vector independent of $E_{j'}$ for $j\neq j'$. For our case such a noise model corresponds best to adding to $\widetilde{{Q}}_\gamma$ in \eqref{Atransform} a matrix $\widetilde{{E}}$ whose columns are $\delta_jE_j$. One may check by vectorizing ${A}^T \widetilde{{E}}^T$ that the corresponding ${E}$ of \eqref{noisemodel} consists of independent and identically distributed Gaussian vectors as rows. However, the elements of each row are now correlated with covariance matrix ${A}^T\mathrm{diag}(\delta){A}$.}

{Finally, we define the signal-to-noise ratio as
\begin{equation}
	\mathrm{SNR} = \frac{1}{(N+1)^2}\sum_{n=0}^N\sum_{m=-n}^n \frac{\|{Q}_\gamma \underline{Y_n^m}\|_{\mathbb{C}^{2(N+1)^2}}}{\delta\|{E} \underline{Y_n^m}\|_{\mathbb{C}^{2(N+1)^2}}}.
\end{equation}
}

\subsection{Solving the boundary integral equations}
Following \cite{delbary2014a} we discretize the perturbed boundary integral equations \eqref{bienoisy} by
\begin{equation}\label{discbie}
\left[I+(\mathcal{S}_{0} L_{N}+\mathcal{H}_{\zeta}^{N})(\Lambda_{\gamma}^\varepsilon-\Lambda_{1}) L_{N} \right] ((\psi_{\zeta}^{N})^\varepsilon|_{\partial \Omega})=e_{\zeta}|_{\partial \Omega},
\end{equation}
where $\mathcal{H}_{\zeta}^{N}$ is the approximation of {the integral operator $\mathcal{S}_\zeta-\mathcal{S}_0$} using the Gauss-Legendre quadrature rule of order $N+1$ on the unit sphere in the aforementioned quadrature points $\{x_k\}_{k=1}^{2(N+1)^2}$. We find the following result regarding the convergence of the perturbed solutions $(\psi_\zeta^N)^\varepsilon$ of \eqref{discbie} analogously to \cite{delbary2012a,delbary2014a}.
\begin{theorem}
	Suppose $D<|\zeta(\xi)|<-\frac{1}{6}\log\varepsilon_2$ and $\mathcal{E}$ is a linear bounded operator from $H^s(\partial \Omega)$ to $H^t(\partial \Omega)$ for all $s\geq 1/2$ and $t>s$.
	Then for all $s>3/2$, there exists $N_0 \in \mathbb{N}$ such that for all $N\geq N_0$ the operator $I+(\mathcal{S}_{0} L_{N}+\mathcal{H}_{\zeta}^{N})(\Lambda_{\gamma}^\varepsilon-\Lambda_{1}) L_{N} $ is invertible in $H^s(\partial \Omega)$. Furthermore we have,
	\begin{equation}
		\|(\psi_{\zeta}^{N})^\varepsilon - \psi_\zeta^\varepsilon\|_{H^s(\partial \Omega)}\leq \frac{C}{N^{s-3/2}}\|e_\zeta\|_{H^s(\partial\Omega)}.
	\end{equation}
\end{theorem}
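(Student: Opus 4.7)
The strategy is to treat the discretized perturbed operator as a small perturbation of the continuous one $B_\zeta^\varepsilon=I+\mathcal{S}_\zeta(\Lambda_\gamma^\varepsilon-\Lambda_1)$, whose invertibility in $H^{1/2}(\partial\Omega)$ is given by Lemma \ref{lemma3} via a Neumann series. First I would upgrade that invertibility to $H^s(\partial\Omega)$ for $s>3/2$: since $\mathcal{E}:H^s\to H^t$ is bounded for every $t>s$, the operator $\mathcal{S}_\zeta\mathcal{E}$ maps $H^s$ compactly into itself (by compactness of the embedding $H^t\hookrightarrow H^s$), so the identity $B_\zeta^\varepsilon=(I+\mathcal{S}_\zeta\mathcal{E}B_\zeta^{-1})B_\zeta$ together with Fredholm theory and the injectivity already established in Lemma \ref{lemma3} gives invertibility of $B_\zeta^\varepsilon$ in $H^s(\partial\Omega)$, with a uniform bound on $\|(B_\zeta^\varepsilon)^{-1}\|_{s,s}$ depending on $|\zeta|$, $\Pi$, $\rho$.

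Next I would establish the key approximation estimate for the spherical-harmonic projector, namely
\begin{equation}
\|f-L_Nf\|_{H^{s'}(\partial\Omega)}\leq \frac{C}{N^{s-s'}}\|f\|_{H^s(\partial\Omega)},\quad s>s'+1,
\end{equation}
which is the standard Gauss-Legendre quadrature bound on the sphere used in \cite{delbary2012a,delbary2014a}. Combined with the smoothing property $\mathcal{S}_0:H^{s-1}\to H^s$ and the fact that the kernel of $\mathcal{S}_\zeta-\mathcal{S}_0$ is smooth (so $\mathcal{H}_\zeta^N$ is spectrally accurate), this yields
\begin{equation}
\|(\mathcal{S}_0L_N+\mathcal{H}_\zeta^N-\mathcal{S}_\zeta)(\Lambda_\gamma^\varepsilon-\Lambda_1)L_N\|_{s,s}\leq \frac{C}{N^{s-3/2}},
\end{equation}
with constant depending on $\|\Lambda_\gamma^\varepsilon-\Lambda_1\|_Y$ (bounded a priori) and on $|\zeta|$ through the kernel of $\mathcal{H}_\zeta^N$. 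Call this discretized operator $B_\zeta^{\varepsilon,N}$.

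For $N$ large enough so that the perturbation is smaller than $\tfrac12\|(B_\zeta^\varepsilon)^{-1}\|_{s,s}^{-1}$, a Neumann series argument gives invertibility of $B_\zeta^{\varepsilon,N}$ in $H^s(\partial\Omega)$ with a uniform bound on the inverse. The error estimate then follows from the standard identity
\begin{equation}
(\psi_\zeta^N)^\varepsilon-\psi_\zeta^\varepsilon=(B_\zeta^{\varepsilon,N})^{-1}(B_\zeta^\varepsilon-B_\zeta^{\varepsilon,N})\psi_\zeta^\varepsilon,
\end{equation}
using that $\psi_\zeta^\varepsilon=(B_\zeta^\varepsilon)^{-1}(e_\zeta|_{\partial\Omega})$ and $\|\psi_\zeta^\varepsilon\|_{H^s(\partial\Omega)}\leq C\|e_\zeta\|_{H^s(\partial\Omega)}$ from the $H^s$-invertibility above.

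The principal technical obstacle will be the second step: obtaining the sharp $N^{-(s-3/2)}$ rate for the composite operator, where the regularity lost when passing from $f$ to $L_Nf$ must be exactly compensated by the smoothing of $\mathcal{S}_0$ together with the regularity of $\Lambda_\gamma^\varepsilon-\Lambda_1$ mapping $H^s$ into $H^{s-1}$. The argument is essentially that of \cite{delbary2012a,delbary2014a}, the only novelty being that $\Lambda_\gamma-\Lambda_1$ is replaced by the perturbed operator $\Lambda_\gamma^\varepsilon-\Lambda_1$; the hypothesis that $\mathcal{E}$ is smoothing (mapping $H^s$ into $H^t$ with $t>s$) is exactly what keeps the constant uniform in $\varepsilon$ and preserves the approximation rate.
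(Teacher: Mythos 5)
Your proposal follows essentially the same route as the paper: the paper's proof is a two-line reduction to the Neumann series argument of Lemma 3.1 and Theorem 3.2 of \cite{delbary2014a}, with the only new ingredient being that Lemma \ref{lemma3} supplies the bounded inverse $(B_\zeta^\varepsilon)^{-1}$ in the stated range of $|\zeta(\xi)|$. Your write-up simply makes explicit the steps (the $H^s$-invertibility, the hyperinterpolation estimate, the perturbation bound, and the error identity) that the paper delegates to the cited reference, and it correctly identifies the role of the smoothing hypothesis on $\mathcal{E}$.
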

\begin{proof}
The result follows from a Neumann series argument as in Lemma 3.1 and Theorem 3.2 of \cite{delbary2014a} as for $D<|\zeta(\xi)|<-\frac{1}{6}\log\varepsilon_2$, there exists a bounded inverse $(B_\zeta^\varepsilon)^{-1}$ by Lemma \ref{lemma3}.
\end{proof}
This result ensures that the solutions of the discretized perturbed boundary integral equations are unique and converge to the solutions of \eqref{bienoisy}.
\subsection{Choice of $|\zeta(\xi)|$ and truncation radius}
It is clear from Method \ref{Method2} that we should set $|\zeta(\xi)|=M^{p}$ for some exponent $p>3/2$. Due to the high sensitivity of the CGO solutions with respect to $|\zeta(\xi)|$, we may choose $|\zeta(\xi)|$ differently in practice, although we will not necessarily have a regularization strategy in theory. One idea of \cite{delbary2012a} is to set $|\zeta(\xi)|$ minimal in the admissible set \eqref{zetaxieq}, that is
\begin{equation}\label{fix}
	|\zeta(\xi)| = \frac{M}{\sqrt{2}}.
\end{equation}
A different idea is to choose $|\zeta(\xi)|$ independently for each $\xi$ such that $|\zeta(\xi)|$ is minimal with $|\zeta(\xi)| = \frac{|\xi|}{\sqrt{2}}$. We take the critical choice $|\zeta(\xi)|=K_1M^{3/2}$ for some constant $0<K_1<1$ to maintain the smallest $|\zeta|$ within the boundaries of the theory.

In practice we compute $\mathbf{t}_{M(\varepsilon)}(\xi,\zeta(\xi))$ in a $\xi$-grid of points $|\xi|\leq M$ as in \cite{delbary2014a}. The Shannon sampling theorem ensures we can recover uniquely the inverse Fourier transform if we sample densely enough. We use the discrete Fourier transform in equidistant $\xi$- and $x$-grids in three dimensions.
\begin{equation}
	\xi_k^j = -M+k\frac{2M}{K-1} \quad \text{ and } \quad x_n^j = -x_{\mathrm{max}}+n\frac{2x_{\mathrm{max}}}{K-1},
\end{equation}
for $n,k=0,\hdots,K-1$, $j=1,2,3$ and some $x_{\max}$ determined by $K$ and $M$. Indeed the discrete Fourier transform requires
\begin{equation}\label{def:K}
	M= \frac{\pi (K-1)^2}{2Kx_{\mathrm{max}}}.
\end{equation}
to recover $q^{\varepsilon}(x_n^j)$ for all $n=0,\hdots,K-1$, $j=1,2,3$. In practical applications, we do not know the noise level, in which case we choose $M$ and $K$ and consequently determine $x_{\mathrm{max}}$. Then we recover $q^{\varepsilon}$ in an appropriate finite element mesh of the unit ball using trilinear interpolation. The discrete Fourier transform is computed efficiently with the use of FFT \cite{frigo2005design} with complexity $\mathcal{O}(K^3\log{K^3})$.

The problem of finding the optimal truncation radius given noisy data $\Lambda_\gamma^\varepsilon$ is largely open and is related to the problem of systematically choosing a regularization parameter of regularized reconstruction for an inverse problem. In this paper, we choose the truncation radius by inspection for the simulated data. For further details on the implementation of the reconstruction algorithm we refer to \cite{delbary2012a,delbary2014a}.
\section{Numerical results}\label{sec:5}
We test Method \ref{method:2} as a regularization strategy. We are interested in whether the reconstruction converges to the true conductivity distribution as the noise level goes to zero, and likewise as the regularization parameter $\alpha$  goes to zero for a non-noisy Dirichlet-to-Neumann map. To this end, we simulate a Dirichlet-to-Neumann map for a well-known phantom.
\subsection{Test phantom}
The piecewise constant heart-lungs phantom consists of two spheroidal inclusions and a ball inclusion embedded in the unit sphere with a background conductivity of $1$. The phantom is summarized in Table \ref{hl-phantom}. We compute and represent the Dirichlet-to-Neumann map and noisy counterparts as described in Section \ref{sec41}. In particular, the forward map is computed using $2(N+1)^2$ boundary points on the unit sphere and using maximal degree $N$ of spherical harmonics with $N=25$.
\begin{table}[ht!]
\centering
\resizebox{\textwidth}{!}{%
\begin{tabular}{@{}lllll@{}}
\toprule
Inclusion & Center            & Radii       & Axes & Conductivity \\ \midrule
Ball      & $(-0.09,-0.55,0)$ & $r = 0.273$ &      & 2            \\ \midrule
Left spheroid &
  $0.55(-\sin(\frac{5\pi}{12}), \cos(\frac{5\pi}{12}), 0)$ &
  \begin{tabular}[c]{@{}l@{}}$r_1 = 0.468$,\\ $r_2 = 0.234$,\\ $r_3 = 0.234$\end{tabular} &
  \begin{tabular}[c]{@{}l@{}}$(\cos(\frac{5\pi}{12}),\sin(\frac{5\pi}{12}),0)$, \\ $(-\sin(\frac{5\pi}{12}),\cos\frac{5\pi}{12}),0), $\\ $(0,0,1)$\end{tabular} &
  0.5 \\ \midrule
Right spheroid &
  $0.45(\sin(\frac{5\pi}{12}), \cos(\frac{5\pi}{12}), 0)$ &
  \begin{tabular}[c]{@{}l@{}}$r_1 = 0.546$,\\ $r_2 = 0.273$,\\ $r_3 = 0.273$\end{tabular} &
  \begin{tabular}[c]{@{}l@{}}$(\cos(\frac{5\pi}{12}),-\sin(\frac{5\pi}{12}),0)$, \\ $(\sin(\frac{5\pi}{12}), \cos(\frac{5\pi}{12}), 0)$, \\ $(0,0,1)$\end{tabular} &
  0.5 \\ \bottomrule
\end{tabular}%
}
\caption{Summary of piecewise constant heart-lungs phantom consisting of three inclusions}
\label{hl-phantom}
\end{table}
\begin{figure}[ht]
\centering
\makebox[\textwidth][c]{
  \subcaptionbox{}{\includegraphics[width=2.2in]{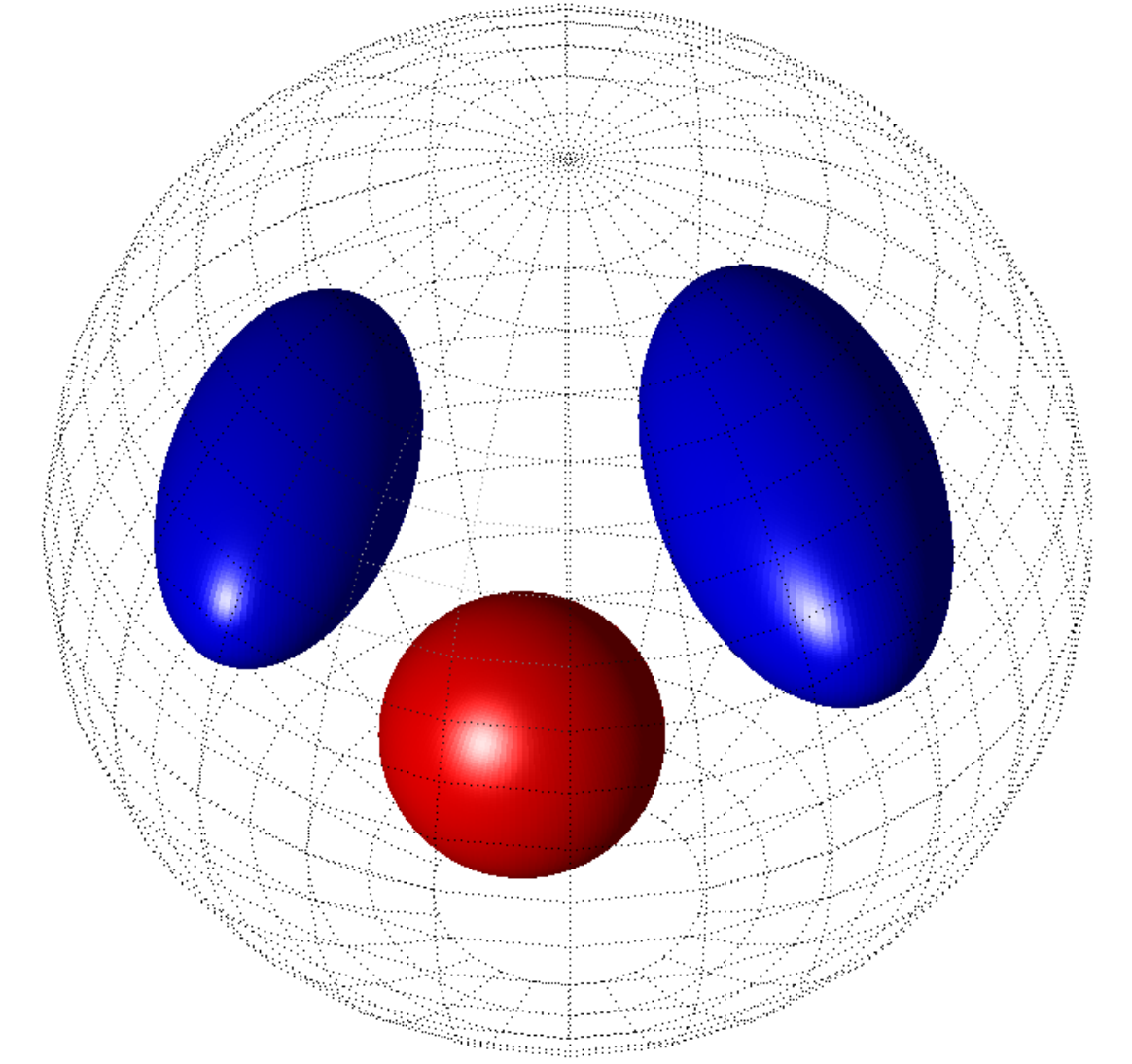}}\hspace{1em}%
  \subcaptionbox{}{\includegraphics[width=2.2in]{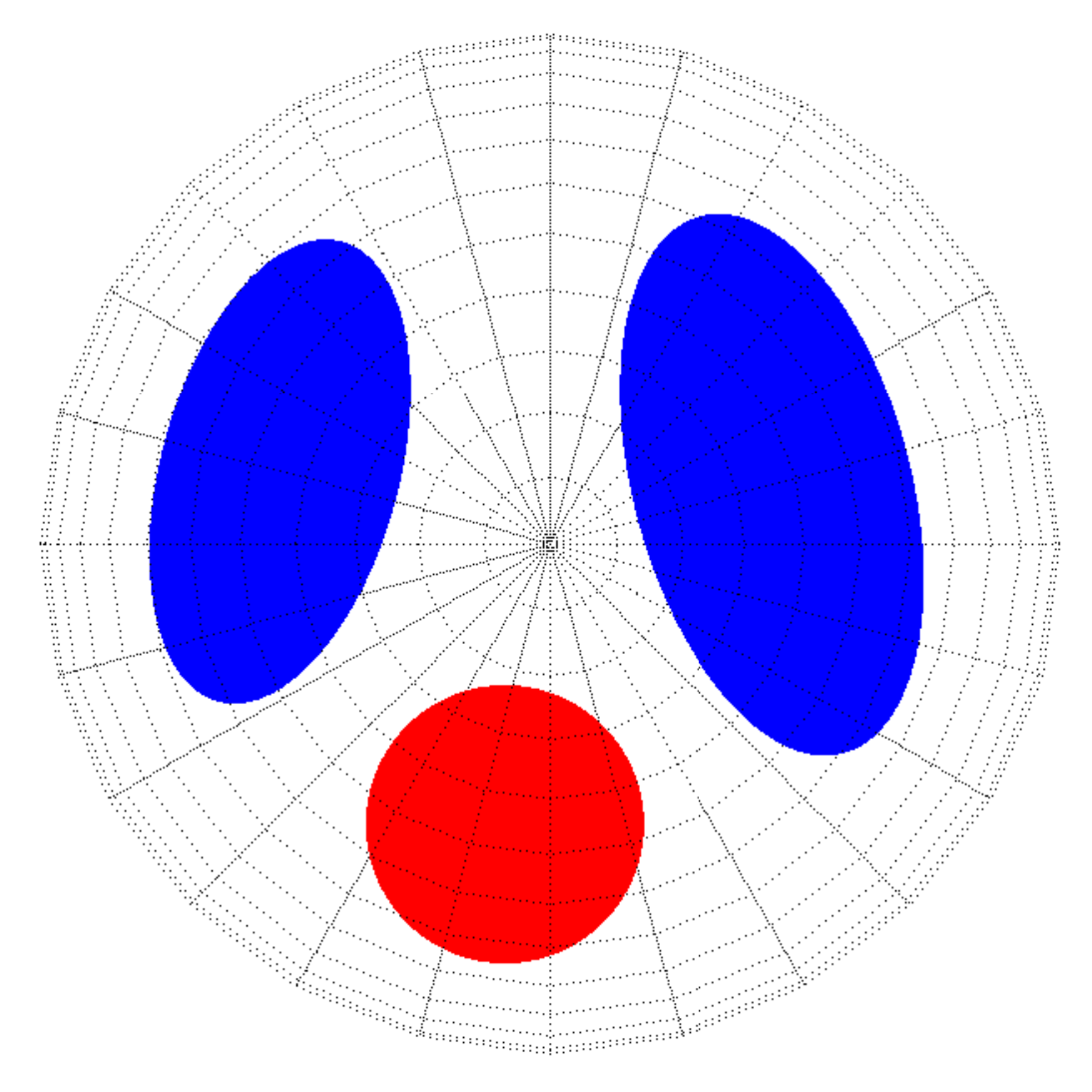}}
  }
  \caption{The piecewise constant heart-lungs phantom in a three-dimensional view (\textsc{a}), and in the planar cross section $x^3 = 0$ (\textsc{b}).}
  \label{fig:phantom1}
\end{figure}

\subsection{Regularization in practice}
We now consider the regularization strategy, Method \ref{method:2}, in practice. Alluding to  \eqref{reqweak}, we test the reconstruction algorithm by keeping the test data fixed and varying the regularization parameter.
\begin{figure}
	\includegraphics[width = 0.92\textwidth]{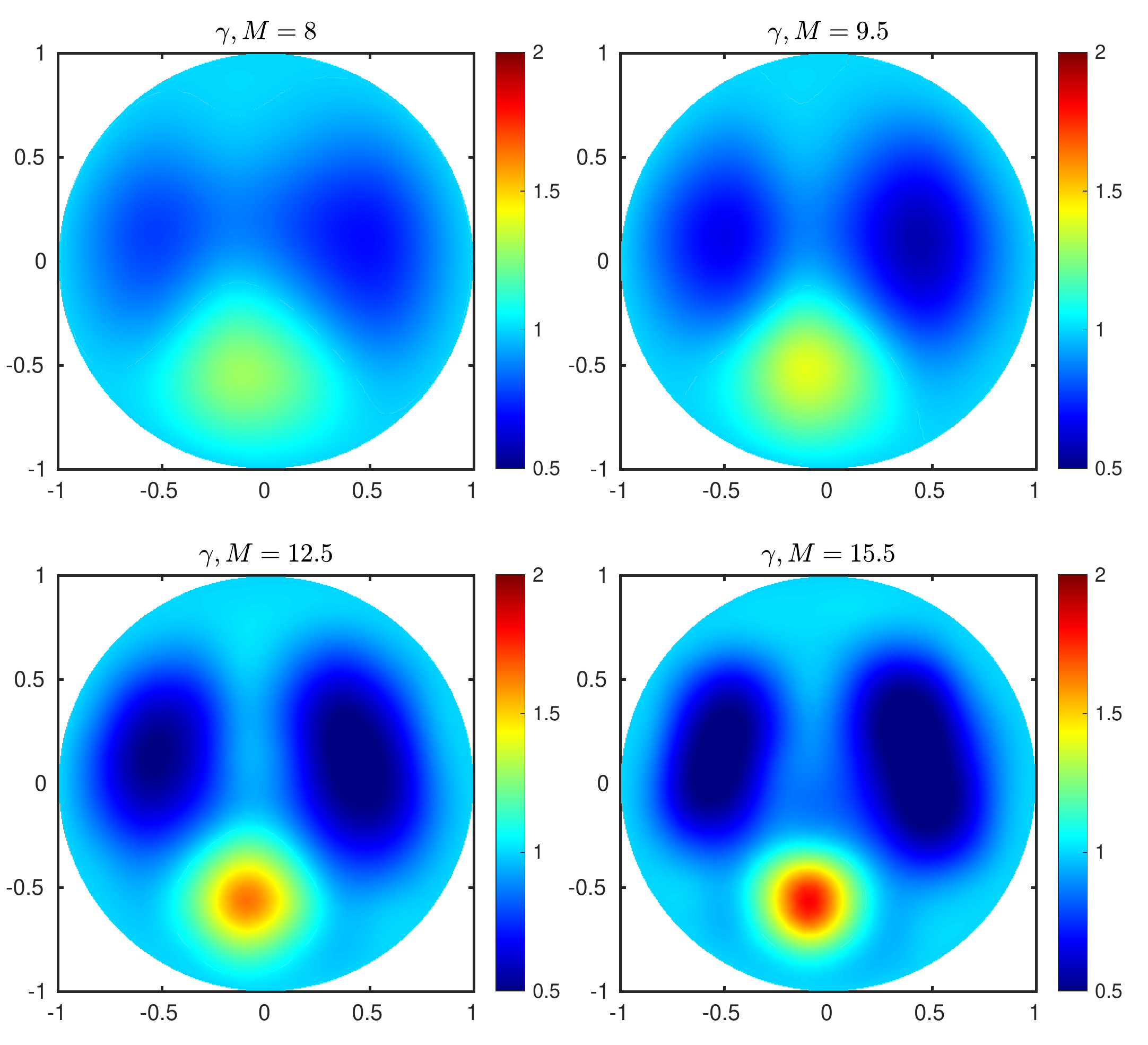}
	\caption{Cross sections $(x^3=0)$ of reconstructions using the regularized reconstruction algorithm with different choices of truncation radius $M$, $K=12$ and $|\zeta(\xi)|=\frac{1}{4}M^{3/2}$. There is no added noise.}
	\label{fig:reg1}\vspace*{-10pt}
\end{figure}
In Figure \ref{fig:reg1}, we see cross-sectional plots of reconstructed conductivities for different truncation radii $M=\alpha^{-1}$. We use $|\zeta(\xi)|=\frac{1}{4} M^{3/2}$ as the critical choice such that $\zeta(\xi)\in \mathcal{V}_\xi$ for $M\geq 8$, and use the accurate Dirichlet-to-Neumann map with no added noise. The figure shows increasing accuracy and contrast for increasing truncation radii. Similar to the findings of \cite{delbary2014a}, we experience failing reconstructions for large enough truncation radii as the frequency data is dominated by exponentially amplified noise inherent to the finite-precision representation of $\Lambda_\gamma$. This happens since there is noise present in the representation of the Dirichlet-to-Neumann map, no matter how accurately it represents the true infinite-precision data. We see the effect of truncation in practice: low resolution, smaller dynamical range and more smoothness caused by the missing high frequency data. Though not immediately clear from this figure, the reconstructions slightly overshoot  the conductivity of the resistive spheroidal inclusions with conductivities as small as 0.38. In addition, the reconstruction algorithm seems to work well in practice on piecewise constant conductivity distributions.

In Figure \ref{fig:reg2}, we see cross-sectional plots of reconstructed conductivities using Dirichlet-to-Neumann maps with added noise and for fixed $|\zeta(\xi)|= \frac{1}{3\sqrt{2}}M^{3/2}$. {Here, $K_1$ is chosen such that $\zeta(\xi)$ is small and admissible for $M\geq 9$.} The truncation radii are chosen optimally by visual inspection. The figure shows reconstructions in the presence of noise of levels ranging from $\varepsilon=10^{-6}$ to $\varepsilon=10^{-3}$ in the Dirichlet-to-Neumann map. We see improving quality of reconstruction as the noise level decreases in accordance with Definition \ref{def:reg2}. Beyond noise levels of $10^{-3}$, reconstruction is still feasible without the corruption of unstable noise, although, they need heavy regularization and start to lack visible features of the phantom. In Figure \ref{fig:noisyreg}, we see the conductivity reconstruction using noisy data with $\varepsilon=10^{-{2}}$ corresponding to approximately $1\%$ relative noise. The resistive spheroidal inclusions start to connect {and the conductive spherical inclusion is not as accurately placed. The remaining intensity in the signal compared to the case $M=9.7$ in Figure \ref{fig:noisyreg} could suggest that additional regularization is needed.}
\begin{figure}[!htbp]
	\includegraphics[width = 0.92\textwidth]{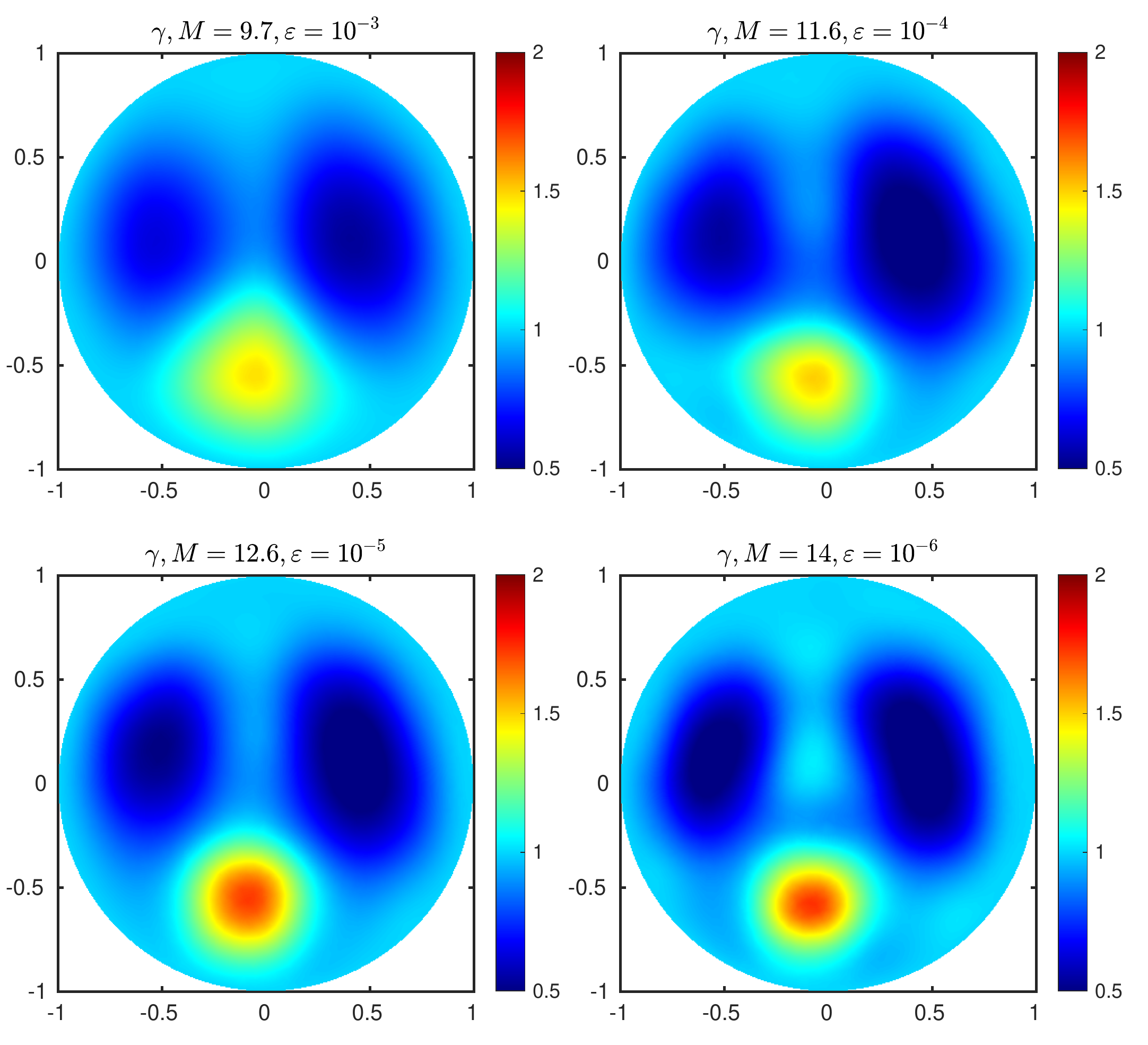}
	\caption{Cross sections $(x^3=0)$ of reconstructions using the regularized reconstruction algorithm on noisy Dirichlet-to-Neumann maps. {The noise levels correspond to relative noise levels $\varepsilon \approx 0.1\%$ with $\mathrm{SNR}=12\cdot 10^3$ (top left), $\varepsilon \approx 0.01\%$ with $\mathrm{SNR}=123\cdot 10^3$ (top right), $\varepsilon \approx 0.001\%$ with $\mathrm{SNR}=1172\cdot 10^3$ (bottom left) and $\varepsilon \approx 0.0001\%$ with $\mathrm{SNR}=11299\cdot 10^3$ (bottom right)}. The parameters used are $K=11$ and $|\zeta(\xi)|=\frac{1}{3\sqrt{2}}M^{3/2}$.}
	\label{fig:reg2}
\end{figure}

\begin{figure}
\centering
\makebox[\textwidth][c]{
  \subcaptionbox{}{\includegraphics[width=2.3in]{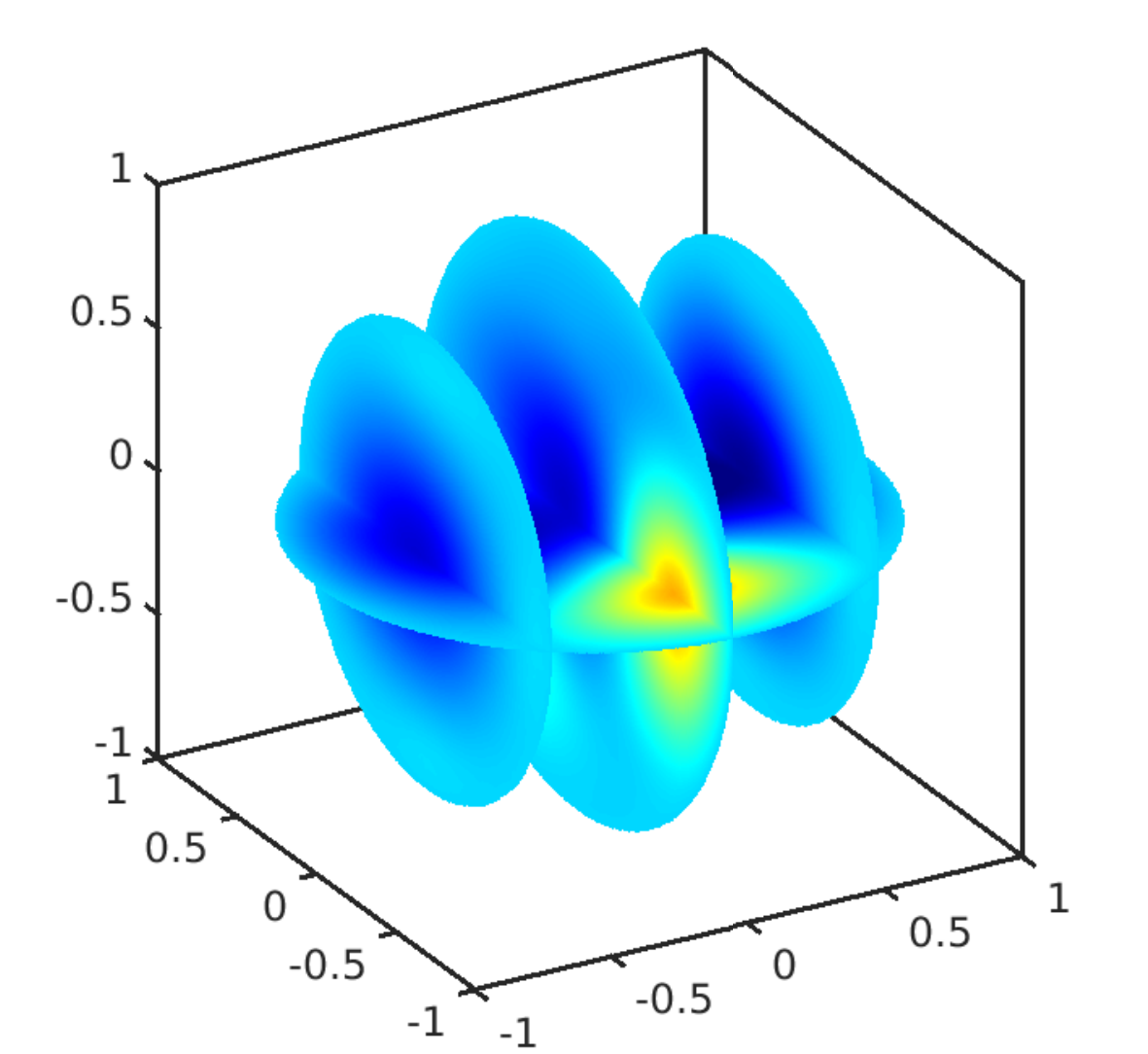}}\hspace{-1em} \label{fig:partA}
  \subcaptionbox{}{\includegraphics[width=2.3in]{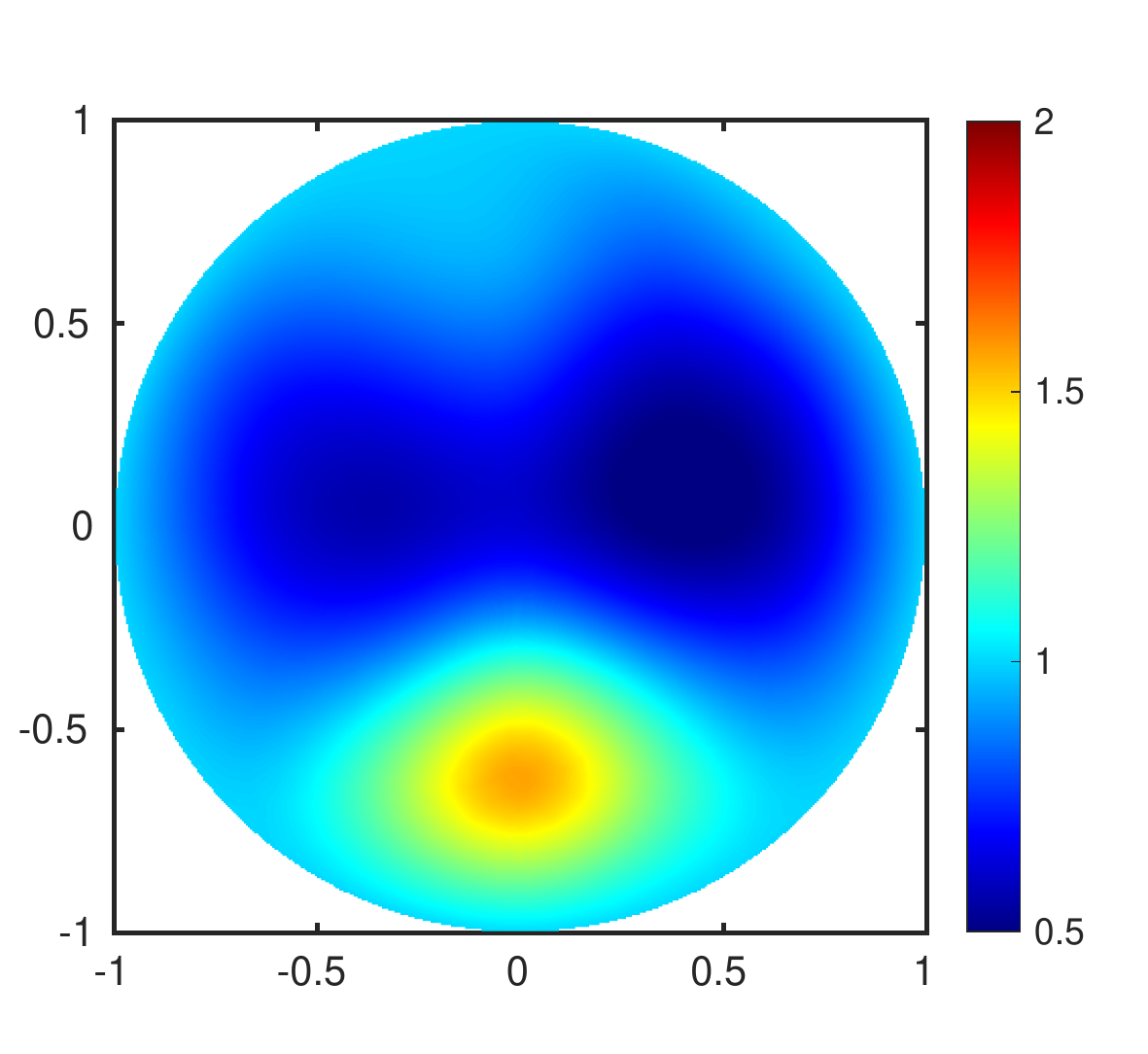}}\label{fig:partB}
  }
  \caption{Regularized reconstruction using noisy Dirichlet-to-Neumann maps with $\varepsilon = 10^{-2}$, which corresponds to approximately $1\%$ relative noise {and $\mathrm{SNR}=1.17\cdot 10^3$}. Plot (\textsc{a}) shows the cross sections $x^3=0$, $x^2 = -0.6$, $x^2 = {-0.05}$ and $x^2 = 0.6$, whereas plot (\textsc{b}) shows the plane corresponding to  $x^3 = 0$. The parameters used are $M=9$, $K=11$ and $|\zeta(\xi)|=\frac{1}{3\sqrt{2}}M^{3/2}$.}
  \label{fig:noisyreg}
\end{figure}

\begin{figure}
	\includegraphics[width = 0.8\textwidth]{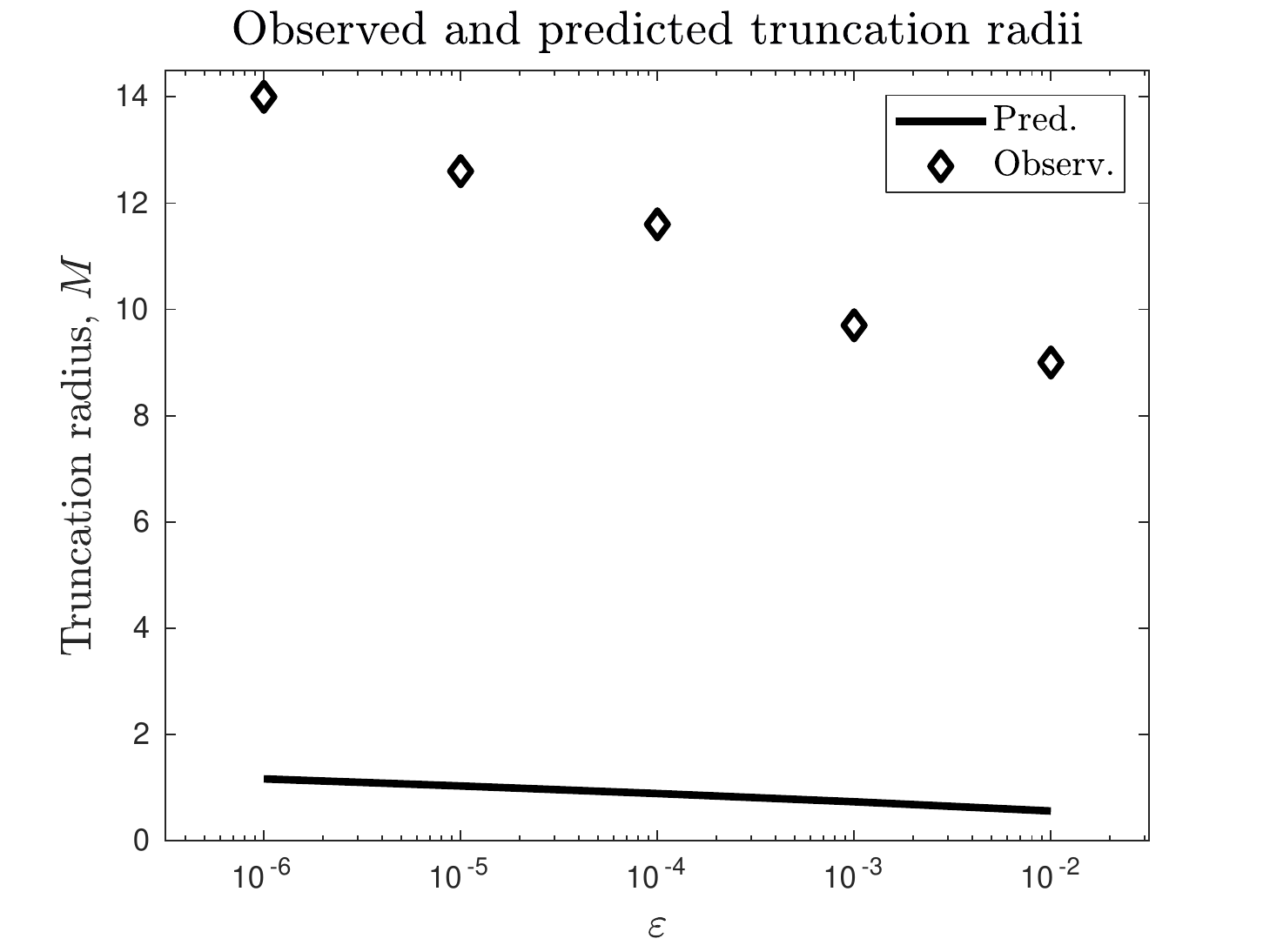}
	\caption{The truncation radii as predicted by theory $M =(-1/11\log(\varepsilon))^{-1/p}$ for $p=3/2$, and the chosen truncation radii for the noisy reconstructions of Figure \ref{fig:reg2} and \ref{fig:noisyreg}. }
	\label{fig:reg3}
\end{figure}

The truncation radii of reconstructions in Figure \ref{fig:reg2} and \ref{fig:noisyreg} chosen by visual inspection are plotted and compared to the theoretically predicted truncation radius in Figure \ref{fig:reg3}. This comparison suggests the prediction is somewhat  pessimistic and that the practical algorithm allows for lighter regularization in comparison to what the theoretical estimates portend. {However, the prediction and practical reconstructions are not directly comparable, since we should pick $|\zeta(\xi)|=K_1M^p$ with $p$ strictly larger than $3/2$ according to theory}. Finally, we note the noise model utilized by \cite{delbary2014a} and \cite{hamilton2020a} give somewhat different results compared to our {unnormalized perturbation}.
{The results also raise the question of how practical the reconstruction method is for more realistic data. Had we decreased the resolution of the basis of spherical harmonics to which voltages and currents are projected, the approximation error of highly oscillatory functions would increase. In this case we can expect to pick the truncation radius smaller to get a stable reconstruction. Investigating the reconstruction method for electrode data is subject to further study and is related to  \cite{isaacson2004} for the two-dimensional D-bar method and \cite{hamilton2020a} for the three-dimensional so-called $\mathbf{t}^{\mathrm{exp}}$ approximation. Possible improvements to the truncation strategy\break include extending the support of $\mathbf{t}$ with prior information using the forward map as in \cite{MR3554880}. In addition, one could experiment with a truncation by thresholding as in \cite{MR3626801}.}

\section{Conclusions}
In this paper we provide and investigate a regularization strategy for the Calder\'on problem in three dimensions. The main result of the paper is Theorem \ref{maintheorem}, which shows that the algorithm defined by Method \ref{Method2} yields reconstructions approximating the true conductivity, when using data corrupted by a sufficiently small perturbation. The proof relies on a gap of the magnitude of the complex frequency in which the existence of unique CGO solutions is guaranteed and the noise level allows a stable and unique solution to the boundary integral equation. The reconstructions from this strategy are regular as a result of the spectral filtering. Numerical results show the regularizing behavior of the reconstruction algorithm in practice and suggests one can utilize higher frequency information in the data than suggested by the theory. The reconstructions of piecewise constant conductivity data show promise even in the case of $1\%$ relative noise.

\section*{Acknowledgments}
AKR and KK were supported by The Villum Foundation (grant no. 25893).


\providecommand{\href}[2]{#2}
\providecommand{\arxiv}[1]{\href{http://arxiv.org/abs/#1}{arXiv:#1}}
\providecommand{\url}[1]{\texttt{#1}}
\providecommand{\urlprefix}{URL }

\medskip
\medskip

\end{document}